\documentclass[]{article}

\usepackage{dsfont}
\usepackage{graphicx}
\usepackage{amsmath}
\usepackage{amssymb}
\usepackage{amsthm}
\usepackage{mathrsfs}
\usepackage{pxfonts}
\usepackage{enumerate}
\usepackage{color}
\usepackage{mathdots}
\usepackage{sectsty}
\usepackage{tikz}
\usepackage{adjustbox}
\usepackage{caption}
\usepackage{bbold}
\usepackage[hidelinks]{hyperref}
\allowdisplaybreaks
\usepackage[nameinlink,capitalise,noabbrev]{cleveref}

\usepackage[shortlabels]{enumitem}

\sectionfont{\scshape\centering\fontsize{11}{14}\selectfont}
\subsectionfont{\scshape\fontsize{11}{14}\selectfont}
\usepackage{fancyhdr}
\usepackage[nottoc,notlot,notlof]{tocbibind}

\newcommand\shorttitle{Collision and non-collision on configuration space}
\newcommand\authors{T. Assiotis and K. Suzuki}

\numberwithin{equation}{section}
\newtheorem{thm}{Theorem}[section]
\newtheorem{lemma}[thm]{Lemma}

\newtheorem{lem}[thm]{Lemma}

\newtheorem{remark}[thm]{Remark}
\newtheorem{prop}[thm]{Proposition}

\newtheorem{assumption}[thm]{Assumption}

\newtheorem*{theorem*}{Theorem}

\newcommand{\supp}{\mathrm{supp}}
\newcommand{\define}{\overset{\mathrm{def}}{=}}

\newcommand{\R}{\mathbb{R}}
\newcommand{\N}{\mathbb{N}}
\newcommand{\eps}{\varepsilon}
\newcommand{\Conf}{\boldsymbol{\Theta}}
\newcommand{\cE}{\mathcal{E}}
\newcommand{\cD}{\mathcal{D}}
\renewcommand{\Cap}{\mathrm{Cap}}
\newcommand{\Ir}[1]{I_{#1}}

\newcommand{\Sine}{\mathsf{Sine}}

\usepackage{xparse}

\ExplSyntaxOn
\NewDocumentCommand{\makeabbrev}{mmm}
 {
  \yoruk_makeabbrev:nnn { #1 } { #2 } { #3 }
 }

\cs_new_protected:Npn \yoruk_makeabbrev:nnn #1 #2 #3
 {
  \clist_map_inline:nn { #3 }
   {
    \cs_new_protected:cpn { #2 } { #1 { ##1 } }
   }
 }
 \ExplSyntaxOff
 
\makeabbrev{\textbf}{tbf#1}{a,b,c,d,e,f,g,h,i,j,k,l,m,n,o,p,q,r,s,t,u,v,w,x,y,z,A,B,C,D,E,F,G,H,I,J,K,L,M,N,O,P,Q,R,S,T,U,V,W,X,Y,Z}

\makeabbrev{\textbf}{bf#1}{a,b,c,d,e,f,g,h,i,j,k,l,m,n,o,p,q,r,s,t,u,v,w,x,y,z,A,B,C,D,E,F,G,H,I,J,K,L,M,N,O,P,Q,R,S,T,U,V,W,X,Y,Z}

\makeabbrev{\textsf}{tsf#1}{a,b,c,d,e,f,g,h,i,j,k,l,m,n,o,p,q,r,s,t,u,v,w,x,y,z,A,B,C,D,E,F,G,H,I,J,K,L,M,N,O,P,Q,R,S,T,U,V,W,X,Y,Z}

\makeabbrev{\mathsf}{mss#1}{a,b,c,d,e,f,g,h,i,j,k,l,m,n,o,p,q,r,s,t,u,v,w,x,y,z,A,B,C,D,E,F,G,H,I,J,K,L,M,N,O,P,Q,R,S,T,U,V,W,X,Y,Z}

\makeabbrev{\mathfrak}{mf#1}{a,b,c,d,e,f,g,h,i,j,k,l,m,n,o,p,q,r,s,t,u,v,w,x,y,z,A,B,C,D,E,F,G,H,I,J,K,L,M,N,O,P,Q,R,S,T,U,V,W,X,Y,Z}

\makeabbrev{\mathrm}{mrm#1}{a,b,c,d,e,f,g,h,i,j,k,l,m,n,o,p,q,r,s,t,u,v,w,x,y,z,A,B,C,D,E,F,G,H,I,J,K,L,M,N,O,P,Q,R,S,T,U,V,W,X,Y,Z}

\makeabbrev{\mathbf}{mbf#1}{a,b,c,d,e,f,g,h,i,j,k,l,m,n,o,p,q,r,s,t,u,v,w,x,y,z,A,B,C,D,E,F,G,H,I,J,K,L,M,N,O,P,Q,R,S,T,U,V,W,X,Y,Z}

\makeabbrev{\mathcal}{mc#1}{A,B,C,D,E,F,G,H,I,J,K,L,M,N,O,P,Q,R,S,T,U,V,W,X,Y,Z}

\makeabbrev{\mathbb}{mbb#1}{A,B,C,D,E,F,G,H,I,J,K,L,M,N,O,P,Q,R,S,T,U,V,W,X,Y,Z}

\makeabbrev{\mathscr}{ms#1}{A,B,C,D,E,F,G,H,I,J,K,L,M,N,O,P,Q,R,S,T,U,V,W,X,Y,Z}

\title{\large \bf COLLISION AND NON-COLLISION FOR DIFFUSIONS ON CONFIGURATION SPACE}
\author{\small THEODOROS ASSIOTIS AND KOHEI SUZUKI}
\date{}

\begin{document}

\maketitle

\begin{abstract}
We develop criteria for collision and non-collision of reversible infinitely many  interacting diffusion processes in the real line. The approach is potential-theoretic and is based on capacity estimates for symmetric Dirichlet forms on the configuration space.  Our main results are model-independent in the sense that no determinantal or Pfaffian structure, prescribed interaction potential, or explicit labelled stochastic differential equation is required. The non-collision criterion involves only the second and third correlation functions of the reversible measure, whereas the collision criterion is based on a local lower bound for a finite-volume conditional density of the reversible measure. As an application, we identify the sharp collision threshold for the diffusion associated with the $\Sine_\beta$-symmetric Dirichlet form: the collision set is polar if and only if $\beta\ge 1$. This provides an infinite-particle counterpart of the classical collision threshold for the finite-particle Dyson Brownian motion with inverse temperature~$\beta$.
\end{abstract}
\tableofcontents

\section{Introduction}

Interacting diffusions describe the random motion of many particles whose trajectories are not independent, but influence one another. Such systems arise naturally, for instance,  in statistical mechanics, stochastic analysis, and random matrix theory. In one spatial dimension, a basic qualitative question is whether two particles can occupy the same position. This question is simple to state, but it captures an important structural feature of the dynamics.

In one dimension, non-collision preserves the order of particles. This gives a canonical way to follow individual particles through time and allows an unlabeled configuration-valued process to be regarded as an ordered (or labelled) particle system. If collisions can occur, the diagonal is no longer an inaccessible boundary: particles may cross, reflect, coalesce, or lose a canonical labeling. Thus the meaning of the dynamics at and after collision time depends on additional choices of boundary conditions. In this sense, the collision problem is not merely a pathwise detail, but determines the qualitative interpretation of the process.

The problem becomes especially delicate for infinite particle systems:
each particle is surrounded by infinitely many other particles, in particular when interactions are of long-range, the collision and non-collision problem becomes more challenging as it involves interactions from infinitely many other particles. The local behavior near a possible collision must therefore be studied together with the surrounding random environment.

\smallskip
The purpose of this note is to  determine whether infinite-interacting particles of one-dimensional Brownian motions collide. Our approach is model-independent in the sense that we do not impose interactions to have a particular form: we rather study  $\mu$-reversible diffusion processes on the configuration space \(\Conf\) for general probability measure $\mu$ on~$\Conf$:
\[
\Conf \define \left\{\theta=\sum_i\delta_{x_i}:\theta(K)<\infty\text{ for every compact }K\subset\mathbb R\right\}.
\]
Since the occurrence of a collision is independent of how the particles are labelled, it is natural to formulate the problem on the configuration space rather than on an infinite product space of labelled particles.
The collision set is then given by
\[
\mathscr C \define \{\theta\in\Conf:\theta(\{x\})\ge2\text{ for some }x\in\mathbb R\}.
\]

In the configuration-space formulation, a collision is exactly the event that the diffusion reaches the collision set \(\mathscr C\). The collision problem is therefore a hitting problem for a distinguished subset of the state space. We tackle this problem from the Dirichlet form perspective. 
A Dirichlet form provides an analytic description of a diffusion process: it measures the infinitesimal energy of functions on the state space, and, under suitable closedness and topological regularity assumptions, it determines the corresponding Markov process. In the configuration-space setting, this allows us to study an interacting particle system through an energy form on functions of configurations, rather than through labelled stochastic differential equations.  The Dirichlet form associated with the diffusion encodes this probabilistic question in a {\it potential analytic} form through capacity: sets of zero capacity are invisible to the diffusion, while sets of positive capacity may be hit. Thus collision and non-collision can be studied by estimating the capacity of the collision set~\(\mathscr C\).

\smallskip
Our first result gives a non-collision criterion in terms of the local repulsion of the reversible measure~$\mu$ under a mild assumption. We assume that the second and third factorial correlation functions of $\mu$ exist and satisfy suitable upper bounds near the diagonal. Roughly speaking, if the two-point correlation function vanishes at least linearly as two points merge, then the collision set has zero capacity. The third correlation function is used only to control cross terms in the energy estimate. This criterion applies to many point processes, in particular, to the \(\mathsf{Sine}_\beta\) process for \(\beta\ge1\), using the recent correlation estimates of the first author and Najnudel~\cite{AssiotisNajnudel}.

Our second result gives a complementary collision criterion. It shows that if, in a finite window and on a positive-measure set of exterior configurations, the conditional density near a two-particle collision is bounded below by
\[
|x-y|^\beta
\]
with \(0\le\beta<1\), then the collision set has positive capacity. This is the opposite regime from the non-collision theorem: the repulsion near the diagonal is too weak to make the diagonal invisible to the associated diffusion processes. We can apply it to the case $\mu=\Sine_\beta$ with $0 \le \beta <1$ by leveraging the Dobrushin--Lanford--Ruelle (DLR) equation for $\mathsf{Sine}_\beta$ established in~\cite{DLR}.

Combining these two criteria yields a capacity dichotomy for the diffusion process associated with the Dirichlet form with reversible measure~\(\mathsf{Sine}_\beta\):
\[
\Cap(\mathscr C)=0 \quad\text{for} \quad \beta\ge1,
\qquad
\Cap(\mathscr C)>0 \quad\text{for} \quad 0<\beta<1.
\]
Thus the value \(\beta=1\) marks the boundary between non-collision and collision from the viewpoint of capacity. The threshold \(\beta=1\) is the same as in finite-dimensional
\(\beta\)-Dyson Brownian motion (C\'epa--L\'epingle~\cite{CepaLep97}):
\begin{equation}\label{Dyson}
\mathrm{d}\boldsymbol{\lambda}_i(t)=\mathrm{d}\mathsf{w}_i(t)+\frac{\beta}{2} \sum_{j\neq i}^N \frac{1}{\boldsymbol{\lambda}_i(t)-\boldsymbol{\lambda}_j(t)}\mathrm{d}t, \ \ i=1,\dots, N,  
\end{equation}
where the $\mathsf{w}_i$, $i=1,\dots, N$ are independent standard Brownian motions. 
The particles are
non-colliding for \(\beta\ge1\), whereas collisions may occur for \(0<\beta<1\).
Our result with a particular application to \(\mathsf{Sine}_\beta\) may therefore be regarded as an infinite-dimensional 
counterpart of this classical finite-dimensional collision threshold for Dyson Brownian motion, see \eqref{BulkDyson} below.

\paragraph{Literature comparison}
This work may be viewed as a model independent development of Osada's
capacity approach to collision and non-collision. In \cite{Osa04}, Osada proved
that the collision set has zero capacity for determinantal random point fields
whose correlation kernel is locally Lipschitz; this includes the sine-kernel
process, and hence the case~\(\mathsf{Sine}_2\). He also exhibited collision
examples within a particular class of determinantal random point fields with
less regular, H\"older-continuous kernels, and obtained a non-collision result
for certain Gibbs measures under assumptions on a prescribed interaction
potential. Thus, although \cite{Osa04} contains both non-collision results and
collision examples, its arguments are tied either to an explicit determinantal
representation of the equilibrium measure or to a specified Gibbsian model.

In this note,  the level of generality at which the collision problem is formulated is considerably broader.
We impose neither an algebraic structure on the equilibrium measure, such as a
determinantal or Pfaffian representation, nor a model-specific description in
terms of a Hamiltonian, pair interaction potential, logarithmic derivative, or
labelled infinite-dimensional stochastic differential equation. For the 
non-collision criterion, our assumptions involve only local upper bounds for the
second and third factorial correlation functions. No information about
correlation functions of order four or higher is required. This is in contrast
with the sectorwise mollification argument in \cite{Osa04}, where the
determinantal structure provides explicit control of the full hierarchy of
finite-volume densities. For the collision criterion, we require only a local lower
bound for one finite-volume conditional sector density, on a set of exterior
configurations of positive measure. In particular, the remaining particles and
the random exterior environment do not need to be described explicitly, and the
lower bound is not required uniformly over all exterior configurations.



\subsection{Setting}  
\paragraph{Configuration space terminology}
 For any $r>0$ we set
\[
\Ir{r}\define(-r,r).
\]
For $n\ge 0$ define the $n$-particle sector on $\Ir{r}$:
\[
\Conf_{r}^{n}\define\{\theta\in\Conf:\ \theta(\Ir{r})=n\}.
\]
Then $\Conf=\bigsqcup_{n=0}^{\infty}\Conf_{r}^{n}$. For $A \subset \R$, let $\pi_A: \Theta \to \Theta(A)$ be the restriction~$\theta \mapsto \theta|_{A}$ for the configuration~$\theta$ onto~$A$. We simply write $\pi_r$ when $A=I_r$.
Let $I_r^k=I_r \times I_r \times \cdots \times I_r$ be the $k$-product of $I_r$. 
If $F$ is $\sigma[\pi_r]$-measurable, where $\sigma$ denotes the corresponding sigma algebra, then it has a unique symmetric representative
$F_r^n:\Ir{r}^n\to\R$ satisfying $F(\theta)=F_r^n(x_1,\dots,x_n)$ whenever
$\theta|_{\Ir{r}}=\sum_{i=1}^n\delta_{x_i}$.
More generally, for $s>0$ and $\eta\in \Conf(\Ir{s}^c)$, an $\Ir{s}$-representation of a function $F$ is a family of symmetric functions
\[
F_{s,\eta}^n:\Ir{s}^n\to\R\qquad (n\ge 0)
\]
such that
\begin{align} \label{e:SR}
F\!\left(\eta+\sum_{i=1}^n\delta_{x_i}\right)=F_{s,\eta}^n(x_1,\dots,x_n)
\qquad\text{for }(x_1,\dots,x_n)\in \Ir{s}^n.
\end{align}
When $F$ is $\sigma[\pi_s]$-measurable, this representation is independent of $\eta$, and we  write $F_s^n$.

\paragraph{Test functions} For $r>0$, let $\mathcal I_r$ be the class of bounded $\sigma[\pi_r]$-measurable functions and set
\[
\mathcal B_\circ\define\bigcup_{r>0}\mathcal I_r.
\]
Following \cite{Osa96}, for each $r>0$ we define
\[
\mathcal D^{\mathrm{loc}}_{\infty, r}
\define
\Bigl\{F\in\mathcal I_r:\ F^n_{r}\in C^\infty(\Ir{r}^n) \  \text{for every~$n$}\ \Bigr\},
\]
and
\[
\mathcal D^{\mathrm{loc}}_{\infty}
\define
\Bigl\{F\in\mathcal B_\circ:\ \text{for every } s>0,\ \eta\in\Conf(\Ir{s}^c),\ \text{and } n\ge0,\ F^n_{s,\eta}\in C^\infty(\Ir{s}^n)\Bigr\}.
\]
Thus every $F\in\mathcal D^{\mathrm{loc}}_\infty$ is local in the sense that $F\in\mathcal I_r$ for some $r$, and smooth in the sense that all finite-volume representatives are smooth. By the argument of \cite{Osa96}, $\mathcal D^{\mathrm{loc}}_\infty\subset C(\Conf)$ the space of continuous functions with respect to vague topology on $\Conf$, i.e., the topology induced by the duality with compactly supported continuous functions in~$\R$. By chain rule,  if $F_1,\dots,F_k\in\mathcal D^{\mathrm{loc}}_\infty$ and $\Phi\in C_b^\infty(\R^k)$, then $\Phi(F_1,\dots,F_k)\in\mathcal D^{\mathrm{loc}}_\infty$.
Moreover, if $\varphi_1,\dots,\varphi_k\in C_c^\infty(\R)$ and $\Phi\in C_b^\infty(\R^k)$, then the cylinder functions
\[
\theta\mapsto \Phi\bigl(\langle \varphi_1,\theta\rangle,\dots,\langle \varphi_k,\theta\rangle\bigr)
\]
belong to $\mathcal D^{\mathrm{loc}}_\infty$, where $\langle \varphi, \theta \rangle \define \theta (\varphi)=\int_\R \varphi(x) \theta(dx)$ for $\varphi \in C_c^\infty(\R)$. 
We now define the capacity of the form. 

\paragraph{Dirichlet form} We define the square field operator on functions which are measurable with respect to one fixed window. If $F,G\in\mathcal D^{\mathrm{loc}}_{\infty,r}$ and $\theta\in\Conf_{r}^{n}$ has $\Ir{r}$-coordinate $x=(x_1,\dots,x_n)$, set
\[
\mathbb D_r[F,G](\theta)
\define
\frac12\sum_{i=1}^n \frac{\partial F_r^n}{\partial x_i}(x)\,\frac{\partial G_r^n}{\partial x_i}(x),
\qquad
\mathbb D_r[F]\define\mathbb D_r[F,F].
\]
Since $\sigma[\pi_r]$-measurable functions do not depend on configurations outside $I_r$, the following equality is immediate by the definition of $\mathbb D_r$ above:  
let $0<r<s$ and  $F,G\in \mathcal D^{\mathrm{loc}}_{\infty,r}\cap \mathcal D^{\mathrm{loc}}_{\infty,s}$. Then
\begin{align} \label{lem:square-field-compat}
\mathbb D_r[F,G](\theta)=\mathbb D_s[F,G](\theta)
\qquad\text{for every }\theta\in\Conf.
\end{align}
Consequently, if $F,G\in\mathcal D^{\mathrm{loc}}_\infty$ and $t>0$ is such that $F,G\in\mathcal D^{\mathrm{loc}}_{\infty,t}$, then the quantity $\mathbb D_t[F,G]$ is independent of the particular choice of such $t$.
For $F,G\in\mathcal D^{\mathrm{loc}}_\infty$, choose $t>0$ such that $F,G\in\mathcal D^{\mathrm{loc}}_{\infty,t}$ and define
\[
\mathbb D[F,G]\define \mathbb D_t[F,G],
\qquad
\mathbb D[F]\define\mathbb D[F,F].
\]
By~\eqref{lem:square-field-compat}, this is well-defined.
For a Borel (with respect to the vague topology in $\Conf$) probability measure $\mu$ on $\Conf$, we define the (pre-)Dirichlet form
\[
\cE(F,G)\define\int_{\Conf}\mathbb D[F,G](\theta)\,\mu(d\theta),
\qquad F,G\in\mathcal D^{\mathrm{loc}}_\infty,
\]
and also,
\[
\mathcal D_\infty\define\{F\in\mathcal D^{\mathrm{loc}}_\infty\cap L^2(\Conf,\mu):\ \cE(F,F)<\infty\}.
\]
When \((\cE,\mathcal{D}_\infty)\) is closable on \(L^2(\Conf,\mu)\), we denote
its closure by \((\cE,\cD)\).
 Moreover, if $(\cE,\cD)$ is quasi-regular, there exists an associated $\mu$-symmetric diffusion process $(\mathbf{X}_t)_{t\ge 0}$ on $\Conf$ under the probability measure~$\mathbf{P}_\gamma$  such that~$\mathbf{X}_0=\gamma$ $\mathbf{P}_\gamma$~almost surely.
 Roughly speaking, the quasi-regularity imposes that the domain~$\cD$ contains sufficiently many continuous functions, which is a necessary and sufficient condition for Dirichlet forms to be associated with Markov processes having a topological regularity of paths. We refer the readers to~Chen--Fukushima~\cite[Definition~1.3.8]{ChenFukushima2012} or Ma--Röckner~\cite[Chapter~IV, Section~3]{MaRoe90}. 
 A mild sufficient condition for the quasi-regularity of the closure~$(\cE, \cD)$ was given e.g.,~in~\cite[Theorem 1]{Osa96}.

 Under the quasi-regularity assumption, we write~$(\mathsf{T}_t)_{t\ge 0}$ for the transition semigroup of $(\mbfX_t)_{t \ge 0}$.  The relation between~the Dirichlet form~\((\cE,\cD)\) and the diffusion $(\mathbf{X}_t)_{t\ge 0}$ becomes more explicit when using the infinitesimal generator~$(\mssL, \mathcal D(\mssL))$: by the standard Riesz representation theorem, there exists a self-adjoint operator~$(\mssL, \mathcal D(\mssL))$ on $L^2(\Conf, \mu)$ such that
 $$\cE(u, v)= -(\mssL u, v) \qquad u \in \mathcal D(\mssL), \quad v \in \cD.$$
Then the transition semigroup satisfies $\mssT_t=e^{\mssL t}$,
namely, $\mssL$ is the infinitesimal generator of $(\mssT_t)_{t \ge 0}$ when $(\mssT_t)_{t \ge 0}$ is regarded as acting on~$L^2(\Conf, \mu)$. 

One notable case constructed via Dirichlet form is the infinite-dimensional Dyson Brownian motion (\cite{Osa12, Tsa16, OsaTan20}):
\begin{equation}\label{BulkDyson}
\mathrm{d}\mathsf{x}_i(t)=\mathrm{d}\mathsf{w}_i(t)+\frac{\beta}{2} \lim_{k \to \infty}\sum_{j:j \neq i, |j-i| \le k} \frac{1}{\mathsf{x}_i(t)-\mathsf{x}_j(t)}\mathrm{d}t
, \ \ i \in \mathbb{Z}.
\end{equation}
The solution is associated with the diffusion corresponding to $(\cE,\cD)$ with reversible measure~$\mu=\Sine_\beta$ for $\beta=1,2,4$. For general $\beta \ge 1$, the strong solutions to~\eqref{BulkDyson} has been constructed in \cite{Tsa16} based on the SDE approach. 
For work on equations for other long-range interaction models see \cite{Osa12, HonOsa15, OsaTan20, OsaTan14,AM24,AM26}.

\paragraph{Capacity} Assume that 
\((\cE,\mathcal{D}_\infty)\) is closable and take the closure~$(\cE, \cD)$. 
For an open set $O\subset\Conf$ define
\[
\mathcal{L}_O\define\{F\in\mathcal{D}:\ F\ge 1\ \text{$\mu$-a.e.~on }O\},
\qquad
\Cap(O)\define
\begin{cases}
\inf_{F\in\mathcal{L}_O}\bigl(\cE(F,F)+\|F\|_{L^2(\mu)}^2\bigr), & \mathcal{L}_O\neq \emptyset,\\
\infty, & \mathcal{L}_O=\emptyset.
\end{cases}
\]
For arbitrary $A\subset\Conf$, set $\Cap(A)\define\inf_{A\subset O\text{ open}}\Cap(O)$.
This is the outer capacity generated by the~$\mathcal D$; in particular it is monotone and countably subadditive.
We say a statement $\mathcal{A}(\gamma)$ holds for quasi-every $\gamma\in \Conf$, if it holds for all $\gamma \in \Conf$ except for a set $\mathfrak{A}$ with $\Cap(\mathfrak{A})=0$.

\subsection{Main results}
\paragraph{Non-collision criterion} We first give our non-collision criterion. For \(n\ge 1\) and \(\theta=\sum_i \delta_{x_i}\in\Conf\), define
\[
\theta^{[n]} \define\sum_{i_1,\dots,i_n\ \text{all distinct}} \delta_{(x_{i_1},\dots,x_{i_n})}
\quad\text{on } \mathbb{R}^n.
\]
Equivalently, for every nonnegative Borel function \(f\) on \(\mathbb{R}^n\),
\[
\theta \mapsto \theta^{[n]}(f)\define\int_{\mathbb{R}^n} f\, d\theta^{[n]}
=
\sum_{i_1,\dots,i_n\ \text{all distinct}} f(x_{i_1},\dots,x_{i_n}).
\]

The following is our criterion for non-collision. 

\begin{assumption}\label{ass:diag_repulsion} Suppose that $(\cE, \cD_\infty)$ is closable with the closure~$(\cE, \cD)$. Furthermore, assume that  
there exist locally integrable symmetric functions $\rho_2:\R^2\to[0,\infty)$ and $\rho_3:\R^3\to[0,\infty)$ such that, for every nonnegative Borel function $f_2$ on $\R^2$ and $f_3$ on $\R^3$,
\begin{align}
\int_{\Conf} \theta^{[2]}(f_2)\,\mu(d\theta)
&=\int_{\R^2} f_2(x,y)\rho_2(x,y)\,dx\,dy,
\label{eq:rho2-campbell}
\\
\int_{\Conf} \theta^{[3]}(f_3)\,\mu(d\theta)
&=\int_{\R^3} f_3(x,y,z)\rho_3(x,y,z)\,dx\,dy\,dz.
\label{eq:rho3-campbell}
\end{align}
Moreover, for every bounded open interval $J\subset\R$ there exists $C_J<\infty$ and $\alpha_J>0$ such that
\begin{equation}\label{eq:rho2-repulsion}
\rho_2(x,y)\le C_J |x-y|,
\end{equation}
for all $x,y\in J$ with $|x-y|\le 1$, and
\begin{equation}\label{eq:rho3-repulsion}
\rho_3(x,y,z)\le C_J (|x-y| \vee |x-z| \vee |y-z|)^{\alpha_J},
\end{equation}
for all $x,y,z\in J$ such that $|x-y| \vee |x-z| \vee |y-z|\le 1$. 
\end{assumption}


\begin{thm}[Non-collision]\label{thm:noncollision}
Under Assumption~\ref{ass:diag_repulsion}, 
 $$\Cap(\mathscr{C})=0.$$ 
If $(\cE, \cD)$ is quasi-regular, then 
$$\mathbf P_\gamma[\tau_{\mathscr{C}}<\infty]=0, \quad \text{quasi-every~$\gamma$},$$
where $\tau_{\mathscr{C}}$ is the first hitting time of the collision set~$\mathscr{C}$ for the associated diffusion process $(\mathbf{X}_t)_{t\ge 0}$ starting at the initial deterministic configuration~$\gamma$.
\end{thm}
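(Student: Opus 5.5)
By countable subadditivity of $\Cap$ and the localisation $\mathscr C=\bigcup_{R\in\N}\mathscr C_R$, where $\mathscr C_R\define\{\theta:\theta(\{x\})\ge2\text{ for some }x\in \Ir{R}\}$, it suffices to show $\Cap(\mathscr C_R)=0$ for each fixed $R$. For fixed $R$ I will build a sequence $F_\eps\in\mathcal D_\infty$ such that $F_\eps\ge1$ on an open neighbourhood of $\mathscr C_R$ and $\cE(F_\eps,F_\eps)+\|F_\eps\|_{L^2(\mu)}^2\to0$. The probabilistic statement then follows from the standard fact for quasi-regular Dirichlet forms (Chen--Fukushima, Ma--R\"ockner) that a set of zero capacity is exceptional, i.e.\ $\mathbf P_\gamma[\tau_{\mathscr C}<\infty]=0$ for q.e.\ $\gamma$.

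\textbf{Construction.} Fix a cutoff $\chi\in C_c^\infty(\Ir{R+1})$ with $\chi=1$ on $\Ir{R}$ and $0\le\chi\le1$. For the diagonal profile I use the logarithmic cutoff familiar from two-dimensional capacity. Let $h_\eps:[0,\infty)\to[0,1]$ be smooth with $h_\eps(t)=1$ for $t\le\eps^2$, $h_\eps(t)=0$ for $t\ge\eps$, and $h_\eps(t)\approx\log(\eps/t)/\log(1/\eps)$ in between, so that $|h_\eps'(t)|\lesssim 1/(t\log(1/\eps))$. Set
\[
G_\eps(\theta)\define\tfrac12\,\theta^{[2]}\bigl(g_\eps\bigr),\qquad g_\eps(x,y)\define h_\eps(|x-y|)\chi(x)\chi(y),
\]
with $|x-y|$ smoothed near $0$ (this is harmless since $h_\eps$ is constant there), and $F_\eps\define\Phi(G_\eps)$, where $\Phi\in C_b^\infty$ satisfies $\Phi(t)=t$ on $[0,1]$ and $\Phi\equiv 2$ for $t\ge 3$. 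Then $G_\eps$ is $\sigma[\pi_{R+1}]$-measurable, its sector representatives are finite sums of smooth functions, and $F_\eps\in\mathcal D^{\mathrm{loc}}_\infty$. If $\theta\in\mathscr C_R$, some pair with $x_i=x_j\in \Ir{R}$ gives $g_\eps=1$, and this persists on a vague neighbourhood, so $F_\eps\ge1$ on an open set containing $\mathscr C_R$. For the $L^2$ norm, $\|F_\eps\|_2^2\le C\int G_\eps\,d\mu\le C\int_{|x-y|<\eps,\,x,y\in J}\rho_2\lesssim\eps^2$ by \eqref{eq:rho2-campbell} and \eqref{eq:rho2-repulsion}, with $J=\Ir{R+1}$.

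\textbf{Energy estimate (the main step).} We have $\mathbb D[F_\eps]\le C\,\mathbb D[G_\eps]$. Writing $\partial_{x_i}G_\eps=\sum_{j\ne i}\partial_1 g_\eps(x_i,x_j)$ and expanding the square gives
\[
\mathbb D[G_\eps]\lesssim \theta^{[2]}\bigl(|\nabla g_\eps|^2\bigr)+\theta^{[3]}\bigl(|\partial_1g_\eps(x,y)|\,|\partial_1 g_\eps(x,z)|\bigr).
\]
For the two-point term, \eqref{eq:rho2-repulsion} yields
\[
\int_{J^2}\frac{\mathbf 1_{\eps^2<|x-y|<\eps}}{|x-y|^2\log^2(1/\eps)}\,|x-y|\,dx\,dy\lesssim\frac{\log(1/\eps)}{\log^2(1/\eps)}\to0,
\]
and the $\chi'$ contributions are $O(\eps^2)$. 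This is exactly where the linear vanishing of $\rho_2$ is sharp: it turns the integral into a borderline two-dimensional logarithmic capacity computation. The cross term is the main obstacle, because it involves $\rho_3$ and products of singular factors. Using \eqref{eq:rho3-repulsion} with $s=|x-y|$, $t=|x-z|$ in $(\eps^2,\eps)$, it is bounded by
\[
\frac{C}{\log^2(1/\eps)}\int\!\!\int\frac{(s\vee t)^{\alpha}}{st}\,ds\,dt .
\]
By symmetry it suffices to treat $t\le s$: integrating first in $t$ gives $\log(s/\eps^2)\le 2\log(1/\eps)$, and then $\int s^{\alpha-1}ds\lesssim\eps^\alpha$. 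The whole cross term is therefore $O(\eps^\alpha/\log(1/\eps))\to0$. Here the positivity of $\alpha_J$ is exactly what is needed, since with $\alpha=0$ this would be of order one. The same Campbell bounds show $\cE(F_\eps,F_\eps)<\infty$, so $F_\eps\in\mathcal D_\infty$.

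Combining these estimates gives $\Cap(\mathscr C_R)\le\cE(F_\eps,F_\eps)+\|F_\eps\|_2^2\to0$, hence $\Cap(\mathscr C)=0$. The remaining technical points are checking vague-openness of the superlevel set and the smoothness of the sector representatives; both are routine given the compact support of $g_\eps$.
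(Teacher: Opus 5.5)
Your proposal is correct and follows essentially the same route as the paper: localise via countable subadditivity, test with $\Phi$ applied to a logarithmically cut-off pair statistic $\theta^{[2]}(\lambda\otimes\lambda\, h_\eps)$, bound the $L^2$-term and the two-point energy using $\rho_2$, bound the cross term using $\rho_3$ (where $\alpha_J>0$ is exactly what controls the $\log^2$ divergence), and conclude polarity from Chen--Fukushima. The differences are cosmetic: your transition annulus is $[\eps^2,\eps]$ rather than the paper's $[\eps/2,\sqrt\eps]$, and you take $\Phi(t)=t$ near $0$ together with an explicit open neighbourhood where $G_\eps\ge 1$, whereas the paper uses $U_\eps=\{F_\eps>1/2\}$ with test function $2F_\eps$; you should also state that $\Phi$ is nondecreasing, so that $\Phi\ge 1$ on $[1,\infty)$.
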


\paragraph{Collision criterion} We now turn to our collision criterion. We write $\mu_{I_r^c}\define (\pi_{I_r^c})_\#\mu(\cdot)=\mu(\pi_{I_r^c}^{-1}(\cdot))$ for the law of the exterior configuration (i.e., the push-forward of $\mu$ by~$\pi_{I_r^c}$) and $\mu_{I_r}^{\eta}$ for a regular conditional law of $\theta|_{I_r}$ given the exterior configuration~$\theta|_{I_r^c}=\eta$.  Define $\Conf^k(I_r)=\{\theta \in \Conf(I_r): \theta(I_r)=k\}$. We write~$\mu_{I_r, k}^{\eta}= \mu_{I_r}^{\eta}|_{\Conf^k(I_r)}$.
For \(k\ge1\), we say that the \(k\)-particle sector of
\(\mu_{I_r}^{\eta}\) admits the labelled symmetric density
\(p_{r,k}^{\eta}(x)\) if \((x, \eta) \mapsto p_{r,k}^{\eta}(x)\) with $x=(x_1, \ldots, x_k)$ is jointly measurable for every $r>0$ and $k \in \N_0$, and for every non-negative Borel function
\(F\) on \(\Theta^k(I_r)\),
\begin{align} \label{d:FD}
    \int_{\Theta^k(I_r)}
    F(\xi)\,\mu_{I_r}^{\eta}(d\xi)
    =
    \frac1{k!}
    \int_{I_r^k}
    F\!\left(\sum_{i=1}^k\delta_{x_i}\right)
    p_{r,k}^{\eta}(x_1,\ldots,x_k)\,dx_1\cdots dx_k .
\end{align}
For \(k=0\), the sector consists of the empty configuration.
\begin{assumption}[Local collision lower bound]\label{ass:lqg_collision}
We suppose that
\begin{enumerate}[(1)]
\item \label{e:EXD}
for every $r>0$ and $k \in \N_0$ the jointly measurable density~$(x, \eta) \mapsto p_{r, k}^\eta(x)$ defined~in~\eqref{d:FD} exists, and the map $x  \mapsto p_{r, k}^\eta(x)$ is  bounded and  lower semi-continuous for $\mu_{I_r^c}$-a.e.~$\eta$.
\item  \label{e:EXD2}  there exist the following data:
\[
r>0,\qquad
J\subset I_r\define (-r,r)\ \text{open interval}
\]
\[
k\in \mathbb N\cup\{0\},\qquad
G\subset \Conf(I_r^c)\ \text{measurable},\qquad
K\subset I_r^k\ \text{compact},
\]
and constants
\[
0\le \beta<1,\qquad c_*>0,
\]
such that the following conditions hold: (when \(k=0\), we use the convention that \(K=\{\ast\}\), that \(K\) has measure one, and that
the variables \(z_1,\ldots,z_k\) are absent.)

\begin{enumerate}
\item \(
\mu_{I_r^c}(G)>0.
\)

\item The set \(K\) has positive \(k\)-dimensional Lebesgue measure if \(k\ge 1\).

\item For $\mu_{I_r^c}$-a.e.~\(\eta\in G\),  the density satisfies the lower bound
\[
p_{r,k+2}^{\eta}(x,y,z_1,\ldots,z_k)
\ge
c_* |x-y|^\beta \qquad \text{for a.e.
\(
(x,y,z_1,\ldots,z_k)\in J^2\times K.
\)}
\]
When \(k=0\), this means simply
\[
p_{r,2}^{\eta}(x,y)\ge c_*|x-y|^\beta
\qquad\text{for a.e. }(x,y)\in J^2.
\]
\end{enumerate}
\end{enumerate}
\end{assumption}

Under Assumption~\ref{ass:lqg_collision}, $(\cE,\mathcal{D}_\infty)$ is closable on $L^2(\Conf,\mu)$ (we recall it in Lemma~\ref{l:CLE}).
We say that $(\cE, \cD)$ is {\it irreducible} if $\cE(u,u)=0$ implies that $u$ is constant $\mu$-a.e.. Sufficient conditions for the irreducibility have been given in terms of number-rigidity and tail-triviality in~\cite{Suz23b}.

Our second main result is regarding collision.
\begin{thm}[Collision]\label{thm:lqg_collision}
Under Assumption~\ref{ass:lqg_collision}, 
\[
\Cap(\mathscr{C})>0.
\]
If the closure~$(\cE, \cD)$ is quasi-regular, then
$$\mathbf P_\mu[\tau_{\mathscr{C}}<\infty]>0,$$
where $\mathbf{P}_\mu$ denotes the law of $\mathbf{X}$ with a random initial distribution~$\mu$. If, furthermore, the closure~$(\cE, \cD)$ is irreducible, then the statement is strengthened as 
$$\mathbf P_\gamma[\tau_{\mathscr{C}}<\infty]=1 \qquad \text{q.e.~$\gamma$}.$$

\end{thm}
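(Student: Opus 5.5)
The plan is to argue by contradiction. Suppose $\Cap(\mathscr C)=0$. Then there are open sets $O_m\supset\mathscr C$ and functions $F_m\in\mathcal D$ with $F_m\ge1$ $\mu$-a.e.\ on $O_m$ and $\cE(F_m,F_m)+\|F_m\|_{L^2(\mu)}^2\to0$. Replacing $F_m$ by $0\vee F_m\wedge1$ (normal contraction), we may assume $0\le F_m\le1$. We then localize to the data of Assumption~\ref{ass:lqg_collision}: the window $I_r$, the exterior set $G$, the sector $\Conf^{k+2}(I_r)$, and the region $J^2\times K$. The goal is to extract from $F_m$ a family of finite-dimensional functions $u_m^\eta(x,y,z)$ on $J^2\times K$ whose weighted $H^1$-norm, with weight $|x-y|^\beta$, tends to zero. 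At the same time, $u_m^\eta$ should be $\ge1$ near the diagonal $\{x=y\}$, since configurations with $x=y$ lie in $\mathscr C\subset O_m$ and $O_m$ is open in the vague topology.

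\textbf{Step 1 (energy domination in the sector).} Using the disintegration $\mu=\int\mu_{I_r}^\eta\,\mu_{I_r^c}(d\eta)$ and the density \eqref{d:FD}, we show that for $F\in\mathcal D_\infty$
\[
\cE(F,F)\ge \frac{1}{(k+2)!}\int_G\int_{J^2\times K}\tfrac12|\nabla_{x,y}F^{k+2}_{r,\eta}|^2\,p^\eta_{r,k+2}\,dx\,dy\,dz\,\mu_{I_r^c}(d\eta).
\]
With the lower bound in Assumption~\ref{ass:lqg_collision}, this is at least $c\int_G\int |\nabla_{x,y}F|^2|x-y|^\beta$. The $L^2$ norm bounds in the same way, using boundedness of the density only where needed. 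The closability in Lemma~\ref{l:CLE} (lower semicontinuity of $p$) should extend this inequality to the closure $\mathcal D$, at least in the form that, for $\mu_{I_r^c}$-a.e.\ $\eta\in G$ and a.e.\ $z\in K$, the section $(x,y)\mapsto F_m(\eta+\delta_x+\delta_y+\sum\delta_{z_i})$ lies in the weighted Sobolev space $H^1(J^2,|x-y|^\beta)$. Its norm is controlled after integration over $(\eta,z)$. Passing to a subsequence, for a positive-measure set of $(\eta,z)$ the sections $u_m$ satisfy $\|u_m\|_{H^1(J^2,|x-y|^\beta)}\to0$.

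\textbf{Step 2 (the diagonal has positive weighted capacity when $\beta<1$).} Change to coordinates $s=(x+y)/2$ and $t=x-y$. The weight $|t|^\beta$ with $\beta<1$ is integrable near $t=0$, and the one-dimensional capacity of $\{0\}$ for $\int|f'(t)|^2|t|^\beta dt$ is positive exactly when $\int_0^1 t^{-\beta}dt<\infty$, i.e.\ $\beta<1$. Concretely, for a.e.\ $s$ and $0<|t|<\delta$ we have
\[
|u(s,t)-u(s,\delta)|\le\Bigl(\int_0^\delta|\partial_tu|^2t^\beta\Bigr)^{1/2}\Bigl(\int_0^\delta t^{-\beta}\Bigr)^{1/2}.
\]
Since $u_m\ge1$ in a neighbourhood of the diagonal (because $O_m\supset\mathscr C$ is open), taking $t\to0$ gives $1\le |u_m(s,\delta)|+C\|\partial_tu_m(s,\cdot)\|_{L^2(t^\beta)}$. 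Averaging over $s$ and over $\delta\in(\delta_0,2\delta_0)$ bounds the right side by a constant times the weighted $H^1$ norm, which tends to $0$. This is a contradiction, so $\Cap(\mathscr C)>0$.

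\textbf{Step 3 (probabilistic statements).} Under quasi-regularity, a set of positive capacity is not exceptional, so $\mathbf P_\mu[\tau_{\mathscr C}<\infty]>0$ by \cite[Theorem 4.7.1]{ChenFukushima2012}-type results: $\Cap(B)=0$ iff $\mathbf P_\mu(\sigma_B<\infty)=0$ for nearly Borel $B$. Note that $\mathscr C$ is $F_\sigma$, hence Borel. Under irreducibility, the function $\gamma\mapsto\mathbf P_\gamma[\tau_{\mathscr C}<\infty]$ is excessive and invariant. Since $\mu$ is a probability measure, the form is recurrent and the process conservative, and the standard $0$–$1$ law for irreducible recurrent forms \cite{ChenFukushima2012} gives that this probability is $1$ for q.e.\ $\gamma$.

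\textbf{Main obstacle.} The delicate point is Step 1. We need a rigorous argument that $F_m\ge1$ on the open set $O_m$ transfers to sections being $\ge1$ near the diagonal for the relevant $(\eta,z)$. We also need the sections of elements of the closure $\mathcal D$, not just of $\mathcal D_\infty$, to have controlled weighted Sobolev norms. I would handle this by approximating each $F_m$ in $\cE_1$-norm by $\mathcal D_\infty$ functions and using Fatou-type lower semicontinuity of the sectional weighted energy. The sets $\{(x,y,z):\eta+\delta_x+\delta_y+\sum\delta_{z_i}\in O_m\}$ are open and contain $\{x=y\}$, and I would use this to get the neighbourhood of the diagonal, shrinking $J,K$ slightly if necessary.
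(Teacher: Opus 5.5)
Your proposal is correct and follows essentially the paper's route. The local domination $\cE_r\le\cE$ and the disintegration of $\mu$ reduce everything to the $(k+2)$-particle sector on $G\times J^2\times K$; sections of elements of $\mathcal D$ lie in the weighted space $H^1_w(J^2)$ with weight $|x-y|^\beta$; the one-dimensional estimate along anti-diagonal lines, using $\int_0^R t^{-\beta}\,dt<\infty$, gives a lower bound; and Chen--Fukushima (capacity zero if and only if exceptional, plus the recurrence/irreducibility $0$--$1$ law) gives the probabilistic statements. The differences are only cosmetic: you argue by contradiction rather than producing the explicit constant $c_{\mathrm{lqg}}$, you approximate by $\mathcal D_\infty$ directly instead of citing the superposition domain characterization, and you use $(\eta,z)$-dependent neighbourhoods of the diagonal instead of the compact set $\mathscr C_0$ with the tube lemma. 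The last change is legitimate because the one-dimensional constant does not depend on the neighbourhood.
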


\subsection{Examples}

Our main example and initial motivation for this paper is to prove a complete characterisation for collision and non-collision for the diffusion associated with $\mu=\Sine_\beta$, which is known as corresponding to the infinite-dimensional Dyson Brownian motion \eqref{BulkDyson} for $\beta=1,2,4$ (\cite{Osa12, Tsa16, OsaTan20}).
We believe that this correspondence holds for every~$\beta>0$, which however remains open. 
The form~$(\cE,\mathcal{D}_\infty)$ is  closable and the closure is quasi-regular for every $\beta>0$, which will be seen in the proof of Theorem~\ref{DysonThm} below. We write $(\mathsf{X}^{(\beta)}_t)_{t\ge 0}$  for the associated diffusion and \(\Cap_\beta\) for the associated capacity. 
\begin{thm}[Dichotomy for $\Sine_\beta$]\label{DysonThm}
The form~$(\cE,\mathcal{D}_\infty)$ is  closable and the closure is quasi-regular for every $\beta>0$. Furthermore, 
\[\Cap_\beta(\mathscr C)=0 \qquad \text{if and only if} \qquad  \beta \ge 1.\]
 In particular, $\mathbf P_\gamma[\tau_{\mathscr{C}}<\infty]=0$ q.e.~$\gamma$ if and only if $\beta \ge 1$. In other words, the diffusion $(\mathsf{X}^{(\beta)}_t)_{t\ge 0}$ has no collisions for q.e.~initial configuration~$\gamma$ if and only if $\beta \ge 1$. 
 For $0<\beta<1$, therefore, we have $\Cap_\beta(\mathscr C)>0$ and that the collision happens with a positive probability:
 $$\mathbf P_\mu[\tau_{\mathscr{C}}<\infty]>0.$$
\end{thm}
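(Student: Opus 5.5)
The plan is to reduce Theorem~\ref{DysonThm} to Theorems~\ref{thm:noncollision} and~\ref{thm:lqg_collision}, plus the general facts on closability and quasi-regularity. The argument has four steps.

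\textbf{Step 1: closability and quasi-regularity.} For closability we verify Assumption~\ref{ass:lqg_collision}\eqref{e:EXD} for $\Sine_\beta$, for all $\beta>0$, and then invoke Lemma~\ref{l:CLE}. The input is the DLR equation of~\cite{DLR}. For $\mu_{I_r^c}$-a.e.\ $\eta$, the conditional law $\mu_{I_r}^{\eta}$ restricted to the $k$-particle sector (with $k$ fixed by number rigidity) has density
\[
p^\eta_{r,k}(x)=\frac{1}{Z_\eta}\prod_{i<j}|x_i-x_j|^\beta \prod_i \exp\bigl(\beta\,\Psi_\eta(x_i)\bigr).
\]
Here $\Psi_\eta(x)=\lim_{R\to\infty}\sum_{|y|<R,\,y\in\eta}\log|1-x/y|$ is a renormalised log-potential, which is continuous and locally bounded on $I_r$. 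Consequently $p^\eta_{r,k}$ is continuous and bounded on $I_r^k$, hence lower semicontinuous, and joint measurability in $(x,\eta)$ follows from the measurability of $\Psi_\eta$. For quasi-regularity we apply~\cite[Theorem 1]{Osa96}. Its hypothesis is local boundedness of the correlation functions of all orders, which holds for $\Sine_\beta$ (e.g.\ via~\cite{AssiotisNajnudel} or the DLR densities).

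\textbf{Step 2: the regime $\beta\ge1$.} We check Assumption~\ref{ass:diag_repulsion}. Existence and local integrability of $\rho_2$ and $\rho_3$ are standard. The estimates of~\cite{AssiotisNajnudel} give, on bounded $J$,
\[
\rho_2(x,y)\le C_J|x-y|^\beta
\quad\text{and}\quad
\rho_3(x,y,z)\le C_J\prod|x_i-x_j|^\beta\le C_J\,(\max|x_i-x_j|)^{3\beta}.
\]
Since $\beta\ge1$ and $|x-y|\le1$, we have $|x-y|^\beta\le|x-y|$, which gives~\eqref{eq:rho2-repulsion}. The $\rho_3$ bound gives~\eqref{eq:rho3-repulsion} with $\alpha_J=3\beta$. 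Theorem~\ref{thm:noncollision} then yields $\Cap_\beta(\mathscr C)=0$ and $\mathbf P_\gamma[\tau_{\mathscr C}<\infty]=0$ q.e.

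\textbf{Step 3: the regime $0<\beta<1$.} We verify Assumption~\ref{ass:lqg_collision}\eqref{e:EXD2}. Take $r=1$, $J=(-1/2,1/2)$, and $k\ge0$ chosen so that the event $\{\theta(I_1)=k+2\}$ has positive $\Sine_\beta$-probability; such a $k$ exists since $\Sine_\beta$ has a positive one-point intensity. For $M>0$, let $G_M$ be the set of $\eta$ that force $k+2$ interior particles and satisfy $\sup_{I_{1}}|\Psi_\eta|\le M$ and $Z_\eta\le M$. For $M$ large, $\mu_{I_1^c}(G_M)>0$. Now take $K\subset (I_1\setminus \bar J_{\delta})^k$ compact with positive measure and with points separated by at least $\delta$, where $\bar J_\delta$ is a $\delta$-neighbourhood of $\bar J$. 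Then on $J^2\times K$ every factor $|x_i-x_j|^\beta$ other than $|x-y|^\beta$ is at least $\delta^\beta$, so $p^\eta_{r,k+2}\ge c_*|x-y|^\beta$ with $c_*$ depending only on $M,\delta,k,\beta$. Theorem~\ref{thm:lqg_collision} then gives $\Cap_\beta(\mathscr C)>0$ and $\mathbf P_\mu[\tau_{\mathscr C}<\infty]>0$.

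\textbf{Step 4: the equivalence for hitting probabilities.} Combining the two regimes, it remains to pass from capacity to hitting. Positive capacity of $\mathscr C$ excludes $\mathbf P_\gamma[\tau_{\mathscr C}<\infty]=0$ for q.e.\ $\gamma$. Indeed, by standard Dirichlet form theory (\cite{ChenFukushima2012}), a quasi-closed set that is a.s.\ not hit from q.e.\ starting point is exceptional, hence polar, and therefore has zero capacity.

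\textbf{Main obstacle.} The main obstacle is extracting from~\cite{DLR} the precise form of the conditional densities: continuity and positivity, uniform control of $\Psi_\eta$ and $Z_\eta$ on a positive-measure set $G$, and the joint measurability. If~\cite{DLR} only gives the density up to an $\eta$-dependent normaliser, the bounds from Step 3 still suffice, since $G_M$ is defined precisely to control that normaliser.
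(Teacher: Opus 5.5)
Your proposal is correct and follows essentially the same route as the paper. Closability comes from Lemma~\ref{l:CLE} plus the DLR densities, and quasi-regularity from~\cite[Theorem 1]{Osa96}. The case $\beta\ge1$ follows from the correlation bounds of~\cite{AssiotisNajnudel} fed into Theorem~\ref{thm:noncollision}. The case $0<\beta<1$ uses number rigidity, a compact $K$ separated from $J$ and from the diagonals, and a positive-measure exterior set on which the non-Vandermonde factor of the DLR density is bounded below, all fed into Theorem~\ref{thm:lqg_collision}.

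Two small points should be tightened. First, the existence of $k$ needs the first-moment argument rather than mere positivity of the intensity: unit intensity gives $\int\theta(I_1)\,d\mu=2>1$, hence $\mu(\theta(I_1)\ge2)>0$. Second, \cite[Theorem 1]{Osa96} also requires $\sum_n n\,\mu(\Conf_r^n)<\infty$, which again follows from the unit intensity.
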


\begin{remark} \normalfont \
\begin{itemize}
\item (Tail-triviality) If $\mu=\Sine_\beta$ is tail-trivial for $\beta <1$, i.e., $\mu(A) \in \{0,1\}$ for every tail event $A \in \cap_{r>0} \sigma[\pi_{I_r^c}]$, then $(\cE, \cD)$ is irreducible by \cite[Corollary I]{Suz23b} combined with the number-rigidity~\cite{DLR}. Thus, the collision indeed happens almost surely for $\beta<1$:
$$\mathbf P_\gamma[\tau_{\mathscr{C}}<\infty]=1 \qquad \text{q.e.~$\gamma$}.$$
The tail-triviality for $\Sine_\beta$ however remains open except for the $\beta=2$ case while we  believe the tail-triviality for every $\beta>0$.
\item (SDE approach) In \cite{Tsa16}, Tsai proved that the solutions to the infinite-dimensional Dyson Brownian motion \eqref{BulkDyson} do not have a collision for $\beta \ge 1$ based on the SDE approach. He also predicted in \cite[Remark 1.1]{Tsa16} that the collision is expected to occur for $0<\beta<1$. Although the rigorous construction of the strong solutions to  SDE~\eqref{BulkDyson} remains open in the regime~$0<\beta<1$, our result  supports his prediction from the Dirichlet-form perspective.   
\end{itemize}
\end{remark}
We expect that  our criteria should apply to a general class of limiting $\beta$-ensembles such as the $\beta$-Airy \cite{RRV} and $\beta$-Bessel \cite{RRbessel} point processes and give an analogous collision/non-collision dichotomy. 
We note that our non-collision criterion recovers the fundamental fact proven by Osada \cite{Osa04}: $\Cap(\mathscr C)=0$ if $\mu$ is determinantal associated with  locally Lipschitz correlation kernel~$\mathsf{K}$, see Proposition \ref{prop:DetermintalCap}.

\section{Proof of Theorem~\ref{thm:noncollision}}
\subsection{Auxiliary lemmas}

\begin{lemma}\label{l:h}
There exists a family  of even functions $h_\eps\in C_c^\infty(\R)$, $0<\eps<1/4$, such that
\begin{equation}\label{eq:heps-properties-intrinsic}
0\le h_\eps\le 1,
\qquad
h_\eps(t)=1\ \text{for }|t|\le \eps/2,
\qquad
\supp h_\eps\subset \{|t|<\sqrt\eps\},
\end{equation}
and
\begin{equation}\label{eq:heps-derivative-intrinsic}
|h_\eps'(t)|\le \frac{c_h}{|\log\eps|\,(|t|\vee \eps)}
\qquad (t\in\R),
\end{equation}
with $c_h>0$ independent of $\eps$.
\end{lemma}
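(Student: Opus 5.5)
The plan is the standard logarithmic cutoff construction, done in the variable $\log|t|$ and then smoothed. First build a non-smooth profile. Set
\[
g_\eps(t)\define
\begin{cases}
1, & |t|\le \eps,\\
\dfrac{\log(\sqrt\eps/|t|)}{\log(\sqrt\eps/\eps)}=\dfrac{2\log(\sqrt\eps/|t|)}{|\log\eps|}, & \eps\le|t|\le\sqrt\eps,\\
0, & |t|\ge\sqrt\eps.
\end{cases}
\]
This $g_\eps$ is even, continuous, takes values in $[0,1]$, and equals $1$ on $|t|\le\eps$. On $\eps<|t|<\sqrt\eps$ its derivative is $|g_\eps'(t)|=2/(|\log\eps|\,|t|)$, and $g_\eps'=0$ elsewhere away from the two corners. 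This already has the shape required by \eqref{eq:heps-derivative-intrinsic}. What remains is to make it smooth and to push its support strictly inside $\{|t|<\sqrt\eps\}$ without losing the bound.

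To avoid convolution near the endpoints, I would smooth in the logarithmic variable. Write $u=\log|t|$, so that $g_\eps$ becomes a piecewise linear function $\psi(u)$. This function equals $1$ for $u\le\log\eps$, equals $0$ for $u\ge\tfrac12\log\eps$, and has slope $-2/|\log\eps|$ in between.

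Fix once and for all a smooth nonincreasing $\chi:\R\to[0,1]$ with $\chi=1$ on $(-\infty,0]$, $\chi=0$ on $[1,\infty)$, and $|\chi'|\le 2$. Let $L\define|\log\eps|$ and $a\define\log\eps+\tfrac12\log 2$, and define
\[
h_\eps(t)\define \chi\!\left(\frac{\log|t|-a}{L/3}\right)\quad (t\neq0),\qquad h_\eps(0)\define1.
\]
The support condition depends on $\tfrac12\log2<L/6$, so I will check when this holds.
\begin{itemize}
\item Since $\eps<1/4$, we have $L>\log4$, hence $L/6>\tfrac13\log2>\tfrac14\log2$. This is not quite the needed $\tfrac12\log 2$. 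I will therefore use the scale $L/4$ in place of $L/3$ if necessary, or simply enlarge the constant; this arithmetic is the only delicate point.
\item With an appropriate fractional scale $cL$, chosen with $c$ small enough that $\tfrac12\log2+cL<\tfrac12 L$ for all $L>\log4$, the function $h_\eps$ equals $1$ for $\log|t|\le a$, that is for $|t|\le\sqrt2\,\eps$, which contains $|t|\le\eps/2$.
\item It vanishes for $\log|t|\ge a+cL<\tfrac12\log\eps$, so $\supp h_\eps\subset\{|t|<\sqrt\eps\}$.
\item It is even, and it is smooth because it is constant near $0$ and $\log|t|$ is smooth away from $0$.
\end{itemize}
The clean choice is $c=1/4$: the condition becomes $\tfrac12\log2<\tfrac14L$, which holds because $L>\log4=2\log2$.

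The derivative is
\[
h_\eps'(t)=\chi'\!\left(\cdot\right)\frac{1}{cL\,t},
\qquad
|h_\eps'(t)|\le \frac{2}{cL|t|}.
\]
This is nonzero only where $|t|\ge\sqrt2\,\eps>\eps$, so there $|t|=|t|\vee\eps$. This gives \eqref{eq:heps-derivative-intrinsic} with $c_h=2/c=8$, independent of $\eps$. On $|t|<\sqrt2\,\eps$ the derivative is $0$, so the bound holds trivially.

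The main obstacle is only the bookkeeping in choosing $c$ so that both the plateau and the support conditions hold uniformly for $\eps<1/4$. The logarithmic parametrisation makes the derivative bound automatic.
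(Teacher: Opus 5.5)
Your construction is correct and is essentially the paper's: both compose a fixed smooth cutoff $\chi$ with a rescaled $\log|t|$. The paper takes $\chi\bigl(\log(2|t|/\eps)/(\tfrac12|\log\eps|)\bigr)$, giving plateau edge $\eps/2$, support in $|t|\le\sqrt\eps/2$, and $c_h=4\|\chi'\|_\infty$. Your final choice uses plateau edge $\sqrt2\,\eps$ and scale $\tfrac14|\log\eps|$; it works because $\tfrac12\log2<\tfrac14|\log\eps|$ for $\eps<1/4$, and it has the minor advantage that $h_\eps'$ vanishes on $|t|\le\eps$. In the write-up, delete the preliminary piecewise-logarithmic profile $g_\eps$ (it is never used) and the abandoned scale $L/3$ (it does fail for $\eps\in[1/8,1/4)$).
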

\begin{proof}
Let $\chi\in C^\infty(\mathbb R)$ satisfy
\[
0\le \chi\le 1,\qquad
\chi(u)=1\ \text{for }u\le 0,\qquad
\chi(u)=0\ \text{for }u\ge 1.
\]
For $0<\varepsilon<1/4$, set
\[
L_\varepsilon\define \frac12|\log\varepsilon|,
\qquad
h_\varepsilon(t)\define
\begin{cases}
\chi\!\left(\dfrac{\log(2|t|/\varepsilon)}{L_\varepsilon}\right), & t\neq 0,\\[1.2ex]
1,& t=0.
\end{cases}
\]
Then $h_\varepsilon\in C_c^\infty(\mathbb R)$ is even and satisfies
\[
0\le h_\varepsilon\le 1,\qquad
h_\varepsilon(t)=1\ \text{for }|t|\le \varepsilon/2,\qquad
\operatorname{supp}h_\varepsilon\subset\{|t|<\sqrt\varepsilon\},
\]
and
\[
|h'_\varepsilon(t)|\le \frac{c_h}{|\log\varepsilon|(|t|\vee \varepsilon)}
\qquad (t\in\mathbb R)
\]
with $c_h=4\|\chi'\|_\infty$.
\end{proof}

Now, fix a bounded open interval $J_0\subset\R$ and let
\[
\mathscr{C}(J_0)\define\{\theta\in\Conf:\ \theta(\{x\})\ge 2\ \text{for some }x\in J_0\}.
\]
Choose a bounded open interval $J_1\subset\R$ such that $\overline{J_0} \subset J_1$. Let $\lambda\in C_c^\infty(\R)$ satisfy
\[
0\le \lambda\le 1,
\qquad
\lambda\equiv 1\ \text{on }J_0,
\qquad
\supp\lambda\subset J_1.
\]
 Let $\Phi\in C_b^\infty(\R)$ satisfy
\begin{equation}\label{eq:Phi-cutoff-intrinsic}
0\le \Phi\le 1,
\qquad
\Phi=0\ \text{on }[0,1/2],
\qquad
\Phi=1\ \text{on }[1,\infty).
\end{equation}
Define the local pair statistic
\begin{equation}\label{eq:S-eps-def-intrinsic}
S_\eps(\theta)\define\int_{\R^2}\lambda(x)\lambda(y)h_\eps(x-y)\,\theta^{[2]}(dx\,dy)
\end{equation}
and the bounded local smooth cutoff
\begin{equation}\label{eq:F-eps-def-intrinsic}
F_\eps(\theta)\define\Phi\bigl(S_\eps(\theta)\bigr).
\end{equation}

\begin{lemma}\label{lem:F-eps-admissible}
For every $\eps\in(0,1/4)$, the function $F_\eps$ belongs to $\mathcal D_\infty^{\mathrm{loc}}$.
Moreover $F_\eps(\theta)=1$ for every $\theta\in \mathscr{C}(J_0)$.
\end{lemma}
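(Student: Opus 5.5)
The proof splits into two parts: show that $F_\eps$ is local and smooth, so that it lies in $\mathcal D_\infty^{\mathrm{loc}}$, and then verify that $F_\eps=1$ on $\mathscr C(J_0)$.

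\emph{Locality.} Since $\supp\lambda\subset J_1$ and $J_1$ is bounded, we can choose $r>0$ with $\overline{J_1}\subset \Ir{r}$. The integrand $\lambda(x)\lambda(y)h_\eps(x-y)$ vanishes unless both $x,y\in J_1$. Hence $S_\eps(\theta)$ depends only on $\theta|_{\Ir{r}}$, and so does $F_\eps=\Phi(S_\eps)$. Because $0\le\Phi\le1$, $F_\eps$ is bounded and therefore belongs to $\mathcal I_r\subset\mathcal B_\circ$. (Note that $S_\eps$ itself is finite, since $\theta(J_1)<\infty$, but it is not bounded; this is why we do not argue about $S_\eps$ directly.)

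\emph{Smoothness of representatives.} Fix $s>0$, $\eta\in\Conf(\Ir{s}^c)$ and $n\ge0$. For $\theta=\eta+\sum_{i=1}^n\delta_{x_i}$ with $\eta|_{J_1}=\sum_{j=1}^m\delta_{w_j}$ (a finite sum, by local finiteness), write out $\theta^{[2]}$ over ordered pairs of distinct atoms. This gives
\[
S_\eps(\theta)=\sum_{i\neq j}g(x_i,x_j)+2\sum_{i=1}^n\sum_{j=1}^m g(x_i,w_j)+c_\eta ,
\]
where $g(x,y)=\lambda(x)\lambda(y)h_\eps(x-y)$ is $C^\infty$ and symmetric, and $c_\eta$ is a constant depending only on $\eta$. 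This expression is a symmetric $C^\infty$ function of $(x_1,\dots,x_n)\in\Ir{s}^n$. Composing with $\Phi\in C_b^\infty$ yields $(F_\eps)^n_{s,\eta}\in C^\infty(\Ir{s}^n)$. The representative is well defined because $\theta^{[2]}$ is defined through the atom list with multiplicity, so relabelling the atoms does not change it.

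\emph{Value on the collision set.} Let $\theta\in\mathscr C(J_0)$. Then there is $x\in J_0$ carrying at least two atoms, say $x_{i_1}=x_{i_2}=x$ with $i_1\ne i_2$. The ordered pairs $(i_1,i_2)$ and $(i_2,i_1)$ each contribute $\lambda(x)^2h_\eps(0)=1$, since $\lambda\equiv1$ on $J_0$ and $h_\eps(0)=1$. All other terms are nonnegative because $\lambda,h_\eps\ge0$. Therefore $S_\eps(\theta)\ge 2\ge1$, and $F_\eps(\theta)=\Phi(S_\eps(\theta))=1$.

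The only point needing care is bookkeeping in the smoothness step. One must handle the cross terms with the fixed exterior particles of $\eta$ and confirm that finitely many terms are involved, which follows from the compact support of $\lambda$. Once that is in place, the argument is elementary.
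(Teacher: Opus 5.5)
Your proposal is correct and follows essentially the same route as the paper. You write the $(s,\eta)$-representative of $S_\eps$ as a smooth symmetric function that includes the finitely many exterior atoms in $\supp\lambda$, compose with $\Phi$, and obtain $S_\eps\ge 2$ on $\mathscr C(J_0)$ from the diagonal pair contributions, using $\lambda\equiv1$ on $J_0$ and $h_\eps(0)=1$. The one difference is that you explicitly verify locality, namely $F_\eps\in\mathcal I_r$ for any $r$ with $\overline{J_1}\subset \Ir{r}$; the paper leaves this implicit in the lemma and only invokes it later, in the energy estimate.
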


\begin{proof}
Fix arbitrary $u>0$, $n\ge 0$, and an exterior configuration $\eta\in\Conf(I_u^c)$. Since \(\lambda\) has compact support,
\[
    \eta|_{\operatorname{supp}\lambda}
    =
    \sum_{\ell=1}^{N}\delta_{y_\ell}
\]
for suitable \(y_1,\ldots,y_N\), where repetitions are allowed.
Thus, using the evenness of $h_\varepsilon$ by~Lemma~\ref{l:h},  the $(u,\eta)$-representative of $S_\eps$ on $I_u^n$ is
\begin{align*}
S_{\eps;u,\eta}^n(x)
&=
\sum_{\substack{1\le p,q\le n\\ p\neq q}} \lambda(x_p)\lambda(x_q)h_\eps(x_p-x_q)
+2\sum_{p=1}^n\sum_{\ell=1}^N \lambda(x_p)\lambda(y_\ell)h_\eps(x_p-y_\ell)
\\
&+\sum_{\substack{1\le \ell,m\le N\\ \ell\neq m}} \lambda(y_\ell)\lambda(y_m)h_\eps(y_\ell-y_m).
\end{align*}
This is a smooth symmetric function of $x\in I_u^n$, since $\lambda$ and $h_\eps$ are smooth. Therefore every finite-volume representative of $F_\eps=\Phi\circ S_\eps$ is smooth, and because $0\le F_\eps\le 1$, we obtain $F_\eps\in\mathcal D_\infty^{\mathrm{loc}}$.

Finally, if $\theta\in \mathscr{C}(J_0)$, then there exists $x\in J_0$ with $\theta(\{x\})\ge 2$. Hence
\[
\theta^{[2]}(\{(x,x)\})=\theta(\{x\})(\theta(\{x\})-1)\ge 2.
\]
Since $\lambda(x)=1$ and $h_\eps(0)=1$, the definition \eqref{eq:S-eps-def-intrinsic} gives $S_\eps(\theta)\ge 2$. By \eqref{eq:Phi-cutoff-intrinsic}, $F_\eps(\theta)=1$.
\end{proof}

\begin{prop}\label{lem:local-collision-cap-zero}
Under Assumption~\ref{ass:diag_repulsion}, 
\[
\Cap\bigl(\mathscr{C}(J_0)\bigr)=0.
\]
\end{prop}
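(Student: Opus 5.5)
We use $F_\eps$ from Lemma~\ref{lem:F-eps-admissible} as a test function. First, $\mathscr{C}(J_0)$ must be contained in an open set on which $F_\eps\ge1$. Since $F_\eps\in\mathcal D^{\mathrm{loc}}_\infty\subset C(\Conf)$, the set $O_\eps\define\{F_\eps>1-\delta\}$ is vaguely open and contains $\mathscr{C}(J_0)$. It is more convenient, though, to use $\{S_\eps>1\}$, which is open because $S_\eps$ is continuous, and contains $\mathscr{C}(J_0)$ because $S_\eps\ge2$ there. Then $\Phi(S_\eps)=1$ on it, since $\Phi=1$ on $[1,\infty)$. So $F_\eps\in\mathcal L_{O_\eps}$ once we know $F_\eps\in\mathcal D$, which requires $\cE(F_\eps,F_\eps)<\infty$ and $F_\eps\in L^2$ (clear since $0\le F_\eps\le1$). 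Hence
\[
\Cap(\mathscr{C}(J_0))\le \cE(F_\eps,F_\eps)+\|F_\eps\|_{L^2(\mu)}^2,
\]
and it suffices to show that both terms tend to $0$ as $\eps\to0$.

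\textbf{$L^2$ term.} Since $F_\eps\le \mathbf 1_{\{S_\eps\ge1/2\}}\le 2S_\eps$, the Campbell formula \eqref{eq:rho2-campbell} and \eqref{eq:rho2-repulsion} with $J=J_1$ give
\[
\|F_\eps\|^2\le 2\int\lambda(x)\lambda(y)h_\eps(x-y)\rho_2\,dx\,dy\le C\int_{J_1}\int_{|x-y|<\sqrt\eps}|x-y|\,dy\,dx\le C\eps\to0.
\]

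\textbf{Energy term.} By the chain rule, $\mathbb D[F_\eps]=\Phi'(S_\eps)^2\,\mathbb D[S_\eps]\le \|\Phi'\|_\infty^2\,\mathbb D[S_\eps]$. The representative in the proof of Lemma~\ref{lem:F-eps-admissible}, together with the evenness of $h_\eps$, gives for a particle at $x_p$
\[
\partial_{x_p}S_\eps=2\sum_{q\ne p}\bigl[\lambda'(x_p)\lambda(x_q)h_\eps(x_p-x_q)+\lambda(x_p)\lambda(x_q)h_\eps'(x_p-x_q)\bigr],
\]
with the sum over all other particles of $\theta$. Therefore $\mathbb D[S_\eps](\theta)=2\sum_p\bigl(\sum_{q\neq p}g(x_p,x_q)\bigr)^2$, where $|g(x,y)|\le C\mathbf 1_{J_1}(x)\mathbf 1_{J_1}(y)\mathbf 1_{|x-y|<\sqrt\eps}\bigl(1+|h'_\eps(x-y)|\bigr)$. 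Expanding the square splits the energy into a diagonal part ($q=q'$), which is a $\theta^{[2]}$-integral, and an off-diagonal part ($q\ne q'$), which is a $\theta^{[3]}$-integral.

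For the diagonal part, \eqref{eq:rho2-campbell}, \eqref{eq:rho2-repulsion} and \eqref{eq:heps-derivative-intrinsic} give
\[
\int_{J_1}\int_{|t|<\sqrt\eps}\frac{|t|}{|\log\eps|^2(|t|\vee\eps)^2}\,dt\,dx+O(\eps)\lesssim \frac{1}{|\log\eps|^2}\Bigl(1+\int_\eps^{\sqrt\eps}\frac{dt}{t}\Bigr)\lesssim\frac1{|\log\eps|}\to0.
\]
This is exactly why $h_\eps$ was built on the logarithmic scale. At $\beta=1$ we have $\rho_2\sim|t|$, and the energy is only borderline summable, so the log cutoff is needed.

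For the off-diagonal (cross) part, \eqref{eq:rho3-campbell} and \eqref{eq:rho3-repulsion} give the bound
\[
\frac{C}{|\log\eps|^2}\int_{J_1}\int_{|s|,|t|<\sqrt\eps}\frac{(|s|\vee|t|)^{\alpha}}{(|s|\vee\eps)(|t|\vee\eps)}\,ds\,dt\,dx,
\]
where $s=x-y$ and $t=x-z$, and the terms involving $\lambda'$ are handled similarly and are smaller. Bounding $(|s|\vee|t|)^\alpha\le|s|^\alpha+|t|^\alpha$ makes the integral factorize. Each factor is either $\int_{|s|<\sqrt\eps}|s|^{\alpha-1}\,ds=O(\eps^{\alpha/2})$ or $\int_{|t|<\sqrt\eps}(|t|\vee\eps)^{-1}\,dt=O(|\log\eps|)$. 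The total is therefore $O(\eps^{\alpha/2}|\log\eps|^{-1})\to0$; here we may take $\alpha\le1$ without loss of generality. This cross-term estimate, with the log factors balanced against the Hölder gain $\alpha_J$, is the step I expect to be most delicate.

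Finally, since these bounds show $\cE(F_\eps,F_\eps)<\infty$, we have $F_\eps\in\mathcal D_\infty\subset\mathcal D$. Letting $\eps\to0$ gives $\Cap(\mathscr{C}(J_0))=0$.
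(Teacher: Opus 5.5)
Your proposal is correct and follows the paper's proof essentially step for step. It uses the same test function $F_\eps=\Phi(S_\eps)$, the same $L^2$ bound via $F_\eps^2\le 2S_\eps$ and $\rho_2$, and the same split of the energy into a $\theta^{[2]}$ diagonal part and a $\theta^{[3]}$ cross part controlled by \eqref{eq:rho2-repulsion} and \eqref{eq:rho3-repulsion}. The only differences are cosmetic: you use the open set $\{S_\eps>1\}$, on which $F_\eps=1$, which avoids the paper's factor $4$ from $U_\eps=\{F_\eps>1/2\}$; and you factorize the cross integral via $(|s|\vee|t|)^\alpha\le|s|^\alpha+|t|^\alpha$ instead of integrating over $0<v<u$ (this inequality holds for every $\alpha>0$, so the reduction to $\alpha\le 1$ is not needed).
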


\begin{proof}
First, let
\[
U_\eps\define\{\theta\in\Conf:\ F_\eps(\theta)>1/2\}.
\]
By Lemma~\ref{lem:F-eps-admissible}, $U_\eps$ is open and $\mathscr{C}(J_0)\subset U_\eps$. 
Moreover $2F_\eps\ge 1$ on $U_\eps$.
To estimate the capacity, we first give an estimate for the $L^2$-term of $F_\eps$, then for the energy part of $F_\eps$.

\medskip
\noindent\textbf{1.~$L^2$-term.}
Since $0\le F_\eps\le 1$ and $\Phi(t)=0$ for $t\le 1/2$,
\[
F_\eps(\theta)^2\le \mathbf 1_{\{S_\eps(\theta)>1/2\}}\le 2S_\eps(\theta).
\]
Therefore, by \eqref{eq:rho2-campbell},
\begin{align}
\|F_\eps\|_{L^2(\mu)}^2
&\le 2\int_{\Conf} S_\eps(\theta)\,\mu(d\theta)
\notag
\\
&=2\iint_{\R^2} \lambda(x)\lambda(y)h_\eps(x-y)\rho_2(x,y)\,dx\,dy.
\label{eq:Feps-L2-rho2}
\end{align}
Since $\supp\lambda\subset J_1$ and $\supp h_\eps\subset \{|t|<\sqrt\eps\}\subset\{|t|<1\}$, \eqref{eq:rho2-repulsion} gives
\[
\|F_\eps\|_{L^2(\mu)}^2
\le C_1(J_1)\iint_{\substack{x,y\in J_1\\ |x-y|<\sqrt\eps}} |x-y|\,dx\,dy
\le C_2(J_1)\int_0^{\sqrt\eps} t\,dt.
\]
Hence, we have
\begin{equation}\label{eq:Feps-L2-to-0-rho}
\|F_\eps\|_{L^2(\mu)}\longrightarrow 0
\qquad (\eps\downarrow 0).
\end{equation}

\medskip
\noindent\textbf{2.~Energy term.}
We set
\[
A_\eps(x,y)\define\mathbf 1_{\{x,y\in J_1,\ |x-y|<\sqrt\eps\}}
\left(1+\frac{1}{|\log\eps|\,(|x-y|\vee\eps)}\right).
\]
Choose \(R>0\) such that
\[
    \overline{J_1}\subset I_R.
\]
Since \(S_\varepsilon\) is \(\sigma[\pi_R]\)-measurable, on the
\(n\)-particle sector of \(I_R\), with coordinate
\(x=(x_1,\ldots,x_n)\), its representative is
\[
    S_{\varepsilon,R}^{n}(x)
    =
    \sum_{\substack{1\le p,q\le n\\p\ne q}}
    \lambda(x_p)\lambda(x_q)h_\varepsilon(x_p-x_q).
\]
Using the evenness of $h_\varepsilon$ (hence the derivative $h_\varepsilon'$ is odd), a direct differentiation gives
\[
\partial_{x_i}S_{\eps,R}^n(x)
=2\sum_{j\neq i}
\Bigl(\lambda'(x_i)\lambda(x_j)h_\eps(x_i-x_j)
+\lambda(x_i)\lambda(x_j)h_\eps'(x_i-x_j)\Bigr).
\]
Since $\supp\lambda\cup\supp\lambda'\subset J_1$, using the boundedness of $\lambda$ and $\lambda'$ together with \eqref{eq:heps-properties-intrinsic}--\eqref{eq:heps-derivative-intrinsic}, we obtain
\begin{equation}\label{eq:dSi-bound}
|\partial_{x_i}S_{\eps,R}^n(x)|\le C_3\sum_{j\neq i} A_\eps(x_i,x_j).
\end{equation}
Hence, for a constant $C_4$ independent of $n$ and $\eps$,
\begin{align}
\sum_{i=1}^n |\partial_{x_i}S_{\eps,R}^n(x)|^2
&\le C_4\sum_{i=1}^n\left(\sum_{j\neq i} A_\eps(x_i,x_j)\right)^2
\notag
\\
&\le C_4\sum_{i=1}^n\sum_{j\neq i} A_\eps(x_i,x_j)^2
+C_4\sum_{i=1}^n\sum_{\substack{j,k\neq i\\ j\neq k}} A_\eps(x_i,x_j)A_\eps(x_i,x_k).
\label{eq:dSi-square-split}
\end{align}
Since $F_\eps=\Phi(S_\eps)$ and $\Phi'$ is bounded, for $\theta\in\Conf^n_R$ with sector coordinate $x=(x_1,\dots,x_n)$ we have
\begin{equation}\label{eq:DF-via-sectorwise-S}
\mathbb D[F_\eps](\theta)
=
\frac12\sum_{i=1}^n |\partial_{x_i}F_{\eps,R}^n(x)|^2
\le
\frac{C_5}{2}\sum_{i=1}^n |\partial_{x_i}S_{\eps,R}^n(x)|^2.
\end{equation}
Combining \eqref{eq:DF-via-sectorwise-S} with \eqref{eq:dSi-square-split} and integrating over $\Conf$ yields
\begin{equation}\label{eq:E-by-factorial-parts}
\cE(F_\eps,F_\eps)
\le C_5'
\left[
\int_{\Conf}\theta^{[2]}(A_\eps^2)\,\mu(d\theta)
+
\int_{\Conf}\theta^{[3]}\bigl(A_\eps(x,y)A_\eps(x,z)\bigr)\,\mu(d\theta)
\right]
\end{equation}
for a constant $C_5'>0$ independent of $\eps$. We estimate the two contributions separately.

\smallskip
\noindent\textbf{2-(a).~Two-point part.}
By \eqref{eq:rho2-campbell}, we have
\begin{align}
\int_{\Conf}\theta^{[2]}(A_\eps^2)\,\mu(d\theta)
&=\iint_{\R^2} A_\eps(x,y)^2\rho_2(x,y)\,dx\,dy
\notag
\\
&\le C_6(J_1)
\left[
\int_0^{\sqrt\eps} t\,dt
+\frac1{|\log\eps|^2}
\left(
\int_0^{\eps}\frac{t}{\eps^2}\,dt
+\int_{\eps}^{\sqrt\eps} t^{-1}\,dt
\right)
\right].
\label{eq:two-point-energy-bound}
\end{align}
Observe that the right-hand side tends to $0$ as $\eps\downarrow 0$. 

\smallskip
\noindent\textbf{2-(b).~Three-point part.}
If \(A_\varepsilon(x,y)A_\varepsilon(x,z)\ne0\), then
\[
    x,y,z\in J_1,\qquad |y-x|<\sqrt\varepsilon,\qquad |z-x|<\sqrt\varepsilon .
\]
Hence
\[
    |y-z|\le |y-x|+|z-x|<2\sqrt\varepsilon.
\]
Taking \(0<\varepsilon<1/4\), all pairwise distances are less than \(1\). Therefore
Assumption~\ref{ass:diag_repulsion} applied on \(J_1\) gives
\[
    \rho_3(x,y,z)
    \le
    C_{J_1}
    \left(
        |x-y|\vee |x-z|\vee |y-z|
    \right)^{\alpha_{J_1}}.
\]
Writing \(u=y-x\) and \(v=z-x\), we have
\[
    |y-z|=|u-v|\le2(|u|\vee|v|),
\]
and hence, after increasing the constant,
\[
    \rho_3(x,x+u,x+v)
    \le
    C_7(J_1)(|u|\vee|v|)^{\alpha_{J_1}}.
\]
Thus, we obtain
\begin{align}
&\int_{\Conf}
\sum_{\substack{x,y,z\in\theta\\ \mathrm{pairwise\ distinct}}}
A_\eps(x,y)A_\eps(x,z)\,\mu(d\theta)
\notag\\
&\qquad\le
C_7(J_1)
\int_{|u|<\sqrt\eps}
\int_{|v|<\sqrt\eps}
\left(
1+\frac{1}{|\log\eps|(|u|\vee\eps)}
\right)
\left(
1+\frac{1}{|\log\eps|(|v|\vee\eps)}
\right)
(|u|\vee |v|)^{\alpha_{J_1}}\,du\,dv .
\label{eq:three-point-energy-bound}
\end{align}
The right-hand side tends to \(0\) as \(\eps\downarrow0\). Indeed, after
splitting the product, the only potentially singular term is bounded by
\[
\frac{C}{|\log\eps|^2}
\int_0^{\sqrt\eps}\int_0^{\sqrt\eps}
\frac{(u\vee v)^{\alpha_{J_1}}}{(u\vee\eps)(v\vee\eps)}\,du\,dv .
\]
Using symmetry and integrating over \(0<v<u<\sqrt\eps\), this is at most
\[
\frac{C}{|\log\eps|^2}
\int_0^{\sqrt\eps}
\frac{u^{\alpha_{J_1}}}{u\vee\eps}
\left(\int_0^u\frac{dv}{v\vee\eps}\right)du
\le
\frac{C}{|\log\eps|^2}
\left[
\eps^{\alpha_{J_1}}+
\int_\eps^{\sqrt\eps}
u^{\alpha_{J_1}-1}\bigl(1+\log(u/\eps)\bigr)\,du
\right],
\]
which tends to \(0\) for every \(\alpha_{J_1}>0\). The remaining terms are easier
and vanish as well.

\paragraph{Conclusion} Now, by the definition of capacity,
\begin{equation}\label{eq:cap-local-by-Feps}
\Cap\bigl(\mathscr{C}(J_0)\bigr)
\le \Cap(U_\eps)
\le 4\bigl(\cE(F_\eps,F_\eps)+\|F_\eps\|_{L^2(\mu)}^2\bigr).
\end{equation}
Combining \eqref{eq:E-by-factorial-parts}, \eqref{eq:two-point-energy-bound}, and \eqref{eq:three-point-energy-bound}, we conclude that
\begin{equation}\label{eq:Feps-E-to-0-rho}
\cE(F_\eps,F_\eps)\longrightarrow 0
\qquad (\eps\downarrow 0).
\end{equation}
Finally, \eqref{eq:cap-local-by-Feps}, \eqref{eq:Feps-L2-to-0-rho}, and \eqref{eq:Feps-E-to-0-rho} imply
\[
\Cap\bigl(\mathscr{C}(J_0)\bigr)=0,
\]
which completes the proof.
\end{proof}

\begin{proof}[Proof of Theorem~\ref{thm:noncollision}]
For each $m\in\N$, let
\[
\mathscr{C}_m\define \mathscr{C}((-m,m)).
\]
so that,
\[
\mathscr{C}=\bigcup_{m=1}^{\infty} \mathscr{C}_m.
\]
By Proposition~\ref{lem:local-collision-cap-zero}, each $\mathscr{C}_m$ has capacity $0$ and thus by countable subadditivity we obtain $\Cap(\mathscr{C})=0$. 
Now suppose that $(\cE, \cD)$ is quasi-regular. Since~$\Cap(\mathscr{C})=0$,  
the set~$\mathscr{C}$ is polar. Therefore by e.g. \cite[Theorem~3.1.3]{ChenFukushima2012}, 
\[
\mathbf P_\gamma[\tau_\mathscr{C}<\infty]=0,
\qquad\text{for quasi-every }\gamma,
\]
as desired.
\end{proof}

\section{Proof of Theorem~\ref{thm:lqg_collision}}\label{sec:colision}
Recall that for a function \(f\) on \(\Conf(I_r)\), we write~\(f_r^\ell\)  for a symmetric
\(\ell\)-particle sector representative of $f$ on \(I_r^\ell\) (recall~\eqref{e:SR}).
We consider the form~
\begin{align} \label{e:TD}
\cE_{I_r}^{\eta, \ell}(f,f) \define \frac{1}{2\ell!}
\int_{I_r^\ell}
\sum_{i=1}^{\ell} |\partial_{x_i}f_r^\ell(x)|^2\,p_{r,\ell}^{\eta}(x)\,dx \qquad f_r^\ell \in C^\infty(\overline{I_r^\ell}), 
\end{align}
on $L^2(I_r^\ell,\frac1{\ell!}p_{r,\ell}^{\eta}(x)\,dx)$. We set~$\cE_{I_r}^{\eta,0} \define 0.$

Since  $p_{r,\ell}^{\eta}$ is lower semicontinuous by \ref{e:EXD}~of~Assumption~\ref{ass:lqg_collision}, this form is closable; see
\cite[Lemma~3.2]{Osa96}.
We denote by $\mathcal D_r^{\eta, \ell}$ the closure. 
We define
\[
\cE_{I_r}^{\eta}(f,f)
\define
\sum_{\ell=0}^{\infty} \cE_{I_r}^{\eta, \ell}(f,f).
\]
Since the form $\cE^\eta_{I_r}$ is the sum of $\cE^{\eta, \ell}_{I_r}$ along $\ell$, it is closable on 
$$\mathcal D_{r,\infty}^{\eta}
\define
\left\{
    f:
    f_\ell\in C^\infty(\overline{I_r^\ell}) \ \text{for every }\ell \in \N_0,\
    \sum_{\ell=0}^{\infty}
    \frac1{\ell!}
    \int_{I_r^\ell}|f_\ell|^2p_{r,\ell}^{\eta}\,dx<\infty,\
    \sum_{\ell=0}^{\infty}
    \cE_{I_r}^{\eta,\ell}(f_\ell,f_\ell)<\infty
\right\}$$
see, e.g., \cite[Proposition 3.7, Chapter I]{MaRoe90}. We denote its closure by $\mathcal D_r^\eta$.
We also write \(\cE_r\) by
\[
\cE_r(F,F)
\define
\int_{\Conf(I_r^c)}
\cE_{I_r}^{\eta}(F_r^\eta,F_r^\eta)\,
\mu_{I_r^c}(d\eta),
\qquad
F_r^\eta(\xi)\define F(\eta+\xi),\quad \xi\in\Conf(I_r).
\]
As the form $\cE_r$ is what is called {\it superposition} of $\cE_{I_r}^\eta$ in terms of $\eta$, it is closable on $\mathcal D_\infty$, see \cite[Chapter V, Proposition 3.1.1]{BouleauHirsch1991}.
We denote its closure by \((\mathcal E_r,\mathcal D_r)\).
\begin{lemma}\label{l:CLE}
Under \ref{e:EXD}~of~Assumption~\ref{ass:lqg_collision}, 
the form~ $\cE$ is closable in $\mathcal D_\infty$. Furthermore, the closures satisfy $\cE_r \le \cE$  for every $r>0$.
\end{lemma}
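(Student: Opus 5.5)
The plan is to compare $\cE$ with the superposition form $\cE_r$ on the common core $\mathcal D_\infty$, and to get closability of $\cE$ from the closability of a family of finite-volume forms that increases as $r\uparrow\infty$.

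\textbf{Step 1: $\cE=\cE_r$ on functions local in $I_r$.} Fix $F\in\mathcal D_\infty$ and choose $t$ with $F\in\mathcal D^{\mathrm{loc}}_{\infty,t}$. For $r\ge t$, disintegrate $\mu$ along $\pi_{I_r^c}$, so that $\mu=\int\mu_{I_r}^\eta(\cdot)\,\mu_{I_r^c}(d\eta)$. By the density formula \eqref{d:FD}, on each sector $\Conf^\ell(I_r)$ we have
\[
\int \mathbb D[F](\eta+\xi)\,\mu^\eta_{I_r}(d\xi)=\frac{1}{2\ell!}\int_{I_r^\ell}\sum_i|\partial_{x_i}F^\ell_{r,\eta}|^2\,p^\eta_{r,\ell}\,dx .
\]
Here I use that $\mathbb D$ is computed through the $I_r$-representative, by \eqref{lem:square-field-compat} together with the fact that $F$ is $\sigma[\pi_t]$-measurable. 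Summing over $\ell$ and integrating in $\eta$ gives $\cE(F,F)=\cE_r(F,F)$ for all $r\ge t$.

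\textbf{Step 2: $\cE_r\le\cE$ for $r<t$.} Cover $I_t$ by $I_r$ and $I_t\setminus I_r$. On the $I_t$-sector we have $\frac12\sum_{i}|\partial_iF|^2\ge\frac12\sum_{i:x_i\in I_r}|\partial_iF|^2$. After conditioning on the exterior of $I_r$, the right-hand side is the square field of $F^\eta_r$ for the form $\cE^\eta_{I_r}$. This uses the smoothness of the $(r,\eta)$-representatives, which holds because $F\in\mathcal D^{\mathrm{loc}}_\infty$, and it uses the consistency of the conditional densities $p^\eta_{r,\ell}$ with the $I_t$-disintegration of $\mu$. So $\cE_r(F,F)\le\cE(F,F)$ on $\mathcal D_\infty$. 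Combining with Step 1, $r\mapsto\cE_r(F,F)$ is nondecreasing and eventually equals $\cE(F,F)$. Monotonicity for general $r<s$ follows by the same argument.

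\textbf{Step 3: closability.} Each $(\cE_r,\mathcal D_\infty)$ is closable, by the superposition argument already cited before the lemma; this relies on the lower semicontinuity in Assumption~\ref{ass:lqg_collision}~\ref{e:EXD}. Let $F_n\in\mathcal D_\infty$ with $F_n\to0$ in $L^2(\mu)$ and $\cE(F_n-F_m)\to0$. Then for each $r$, $\cE_r(F_n-F_m)\le\cE(F_n-F_m)\to0$, so closability of $\cE_r$ gives $\cE_r(F_n)\to0$. We have
\[
\cE(F_n)=\lim_{r\to\infty}\cE_r(F_n)=\sup_r\cE_r(F_n),
\]
and the supremum over an increasing family of closable (lower semicontinuous on closures) forms is handled by a standard $\eps/2$ argument. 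Given $\eps>0$, pick $N$ with $\cE(F_n-F_m)<\eps$ for $n,m\ge N$. Then
\[
\cE_r(F_n)^{1/2}\le\cE_r(F_n-F_m)^{1/2}+\cE_r(F_m)^{1/2}\le\eps^{1/2}+\cE_r(F_m)^{1/2}.
\]
Let $m\to\infty$ for fixed $r$ to get $\cE_r(F_n)\le\eps$, then take $\sup_r$ to get $\cE(F_n)\le\eps$. Hence $\cE$ is closable. Passing to closures, $\cE_r\le\cE$ extends from $\mathcal D_\infty$ to $\mathcal D\subset\mathcal D_r$ by taking $\cE$-Cauchy approximations.

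\textbf{Main obstacle.} The delicate point is Step 2 for $r<t$. There one must identify the restriction of the $I_t$-sector derivative to the particles in $I_r$ with the $\cE^\eta_{I_r}$ square field. This requires checking that the smooth representatives on $I_r^\ell$ (closures $\overline{I_r^\ell}$) used in \eqref{e:TD} are compatible with the joint measurability of $(x,\eta)\mapsto p^\eta_{r,\ell}$. I would address it directly via the tower property of conditional laws, using $\mu^{\eta}_{I_r}$ as the conditional law of $\mu^{\eta'}_{I_t}$ restricted to $I_r$.
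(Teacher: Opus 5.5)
Your proposal is correct and follows essentially the paper's route: you show $\cE_r\le\cE$ on $\mathcal D_\infty$ with $\cE=\sup_r\cE_r$, then deduce closability of $\cE$ as the monotone limit of the closable superposition forms $\cE_r$, proving by hand the Ma--R\"ockner result (Chapter~I, Proposition~3.7) that the paper simply cites. The ``main obstacle'' you flag in Step~2 is not a real difficulty: the $I_r$-partial square field is bounded by $\mathbb D[F](\eta+\xi)$ pointwise on $\Conf$, so the single disintegration along $\pi_{I_r^c}$ suffices, and you need neither a tower property nor any consistency of the densities $p^\eta_{r,\ell}$ across windows.
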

\begin{proof}

Note that the family~$r \mapsto \cE_r$ is monotone  increasing on $\mathcal D_\infty$ by the monotonicity of the square field~\eqref{lem:square-field-compat} (or see~e.g.,~\cite[Lemma~2.2]{Osa96}), and also note that  the form~$(\cE, \mathcal D_\infty)$  is the monotone limit form of~the sequence~$m \mapsto (\cE_{m}, \mathcal D_\infty)$ for $m \in \N$. Thus, the form~$(\cE, \mathcal D_\infty)$ is closable, by \cite[Proposition 3.7, Chapter I]{MaRoe90} and the closures satisfy $\cE_r \le \cE$ for every $r>0$.
\end{proof}

Take $0 \le \beta<1$ and a bounded open interval~$J$. Set
\(
    w(x,y)\define |x-y|^\beta 
\)
and define
\[
    H^1_w(J^2)
    \define
    \overline{C^\infty(\bar J^2)}^{\|\cdot\|_{H^1_w(J^2)}},
\]
where
\[
    \|v\|_{H^1_w(J^2)}^2
    \define
    \int_{J^2}
    \left(
        |v|^2+|\partial_x v|^2+|\partial_y v|^2
    \right)|x-y|^\beta\,dx\,dy .
\]
\begin{lem}\label{lem:weighted_two_particle}
Let \(0\le \beta<1\), and let \(L\Subset J\) be a compact interval of positive
Lebesgue measure contained in the bounded open interval \(J\). There exists a constant \(c_{L,J,\beta}>0\) such that the following holds: if
\(v \in H^1_w(J^2)\) satisfies  that there exists an open neighborhood \(W\subset J^2\) of
\[
    \Delta_L\define\{(x,x):x\in L\}
\]
such that
\[
    v\ge 1
    \qquad
    |x-y|^\beta dxdy\text{-a.e. on }W,
\]
then
\[
    \int_{J^2}
    \left(
        |v|^2+|\partial_x v|^2+|\partial_y v|^2
    \right)|x-y|^\beta\,dx\,dy
    \ge c_{L,J,\beta}.
\]
\end{lem}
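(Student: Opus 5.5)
Change variables to centre-of-mass and relative coordinates, $s=(x+y)/2$ and $t=x-y$. The weight then becomes $|t|^\beta$, and $|\partial_x v|^2+|\partial_y v|^2=\tfrac12|\partial_s \tilde v|^2+2|\partial_t\tilde v|^2$, where $\tilde v(s,t)=v(s+t/2,s-t/2)$. Choose $\delta>0$ with $\delta<\operatorname{dist}(L,J^c)/2$ small, and put $L_\delta=\{s:\operatorname{dist}(s,L)\le\delta\}$. The box $B=\{(s,t):s\in L,\ |t|<\delta\}$ then lies inside the image of $J^2$. It suffices to bound the energy on $B$ from below, so the whole problem reduces to a family of one-dimensional problems in $t$, indexed by $s\in L$.

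\textbf{Step 1: one-dimensional estimate in $t$.} For $s\in L$, write $g(t)=\tilde v(s,t)$ on $(-\delta,\delta)$. Since $W$ is an open neighbourhood of the diagonal, $g\ge1$ for a.e.\ $t$ with $|t|$ small. I claim that for every $g$ in the weighted $H^1$ closure on $(-\delta,\delta)$ with this property,
\[
\int_{-\delta}^{\delta}\bigl(|g|^2+|g'|^2\bigr)|t|^\beta\,dt\ \ge\ c_{\delta,\beta}>0 .
\]
To prove it, take smooth $g$ and work on $(0,\delta)$. Either $|g|\ge1/2$ on a set of $t\in(\delta/2,\delta)$ of measure at least $\delta/4$, and then the $L^2$ term is bounded below by $\tfrac14(\delta/2)^\beta\delta/4$; or there is $t_1\in(\delta/2,\delta)$ with $|g(t_1)|<1/2$. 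In the second case pick $t_0$ near $0$ with $g(t_0)\ge1$. Cauchy--Schwarz gives
\[
\tfrac12\le\int_{t_0}^{t_1}|g'|\,dt\le\Bigl(\int|g'|^2t^\beta\,dt\Bigr)^{1/2}\Bigl(\int_0^\delta t^{-\beta}\,dt\Bigr)^{1/2},
\]
and $\int_0^\delta t^{-\beta}\,dt=\delta^{1-\beta}/(1-\beta)<\infty$ precisely because $\beta<1$. So the energy is at least $(1-\beta)\delta^{\beta-1}/4$. This is the key place where $\beta<1$ enters: for $\beta\ge1$ the bound fails, and a logarithmic cutoff like $h_\eps$ of Lemma~\ref{l:h} produces vanishing energy.

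\textbf{Step 2: integrate over $s$.} Apply Step 1 for a.e.\ $s\in L$ and integrate in $s$ over $L$, discarding the $\partial_s$ term:
\[
\|v\|^2_{H^1_w(J^2)}\ \ge\ c\int_L\int_{-\delta}^{\delta}\bigl(|\tilde v|^2+|\partial_t\tilde v|^2\bigr)|t|^\beta\,dt\,ds\ \ge\ c\,|L|\,c_{\delta,\beta},
\]
with $c$ coming from the Jacobian and the coefficient comparison. Note that $|L|>0$ is needed here.

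\textbf{Step 3: remove smoothness, the main obstacle.} The difficulty is that $v$ is only an $H^1_w$ limit, and the neighbourhood $W$ is arbitrary. Its $t$-width near $\Delta_L$ is positive for each $s$, but not uniform, and Step 1's constant must not depend on it. The constant above is indeed uniform, since only the existence of some $t_0$ near $0$ with $g(t_0)\ge 1$ was used.

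To pass to limits, I would take smooth $v_n\to v$ in $H^1_w$. Along a subsequence, for a.e.\ $s$ the slices $\tilde v_n(s,\cdot)$ converge in the one-dimensional weighted $H^1$ space, by Fubini. On $(t_0,\delta)$, with $t_0>0$, that space embeds continuously into $C([t_0,\delta])$, which justifies the pointwise argument of Step 1 for the limit $g$. In fact the Cauchy--Schwarz estimate shows $g$ is continuous up to $t=0$ because $t^{-\beta}$ is integrable. Hence the argument applies directly to the absolutely continuous representative of a.e.\ slice, and the proof closes with $c_{L,J,\beta}=c\,|L|\,\min\bigl\{(\delta/2)^\beta\delta/16,\ (1-\beta)\delta^{\beta-1}/4\bigr\}$.
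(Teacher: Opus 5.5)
Your proposal is correct and follows essentially the paper's argument. The paper also passes to centre-of-mass and relative coordinates $x=m+t$, $y=m-t$, and proves a one-dimensional bound via the fundamental theorem of calculus between $t\approx 0$ (where the absolutely continuous representative is $\ge 1$) and a point $a\in[R/2,R]$ controlled by the $L^2$ term, using Cauchy--Schwarz with $\int_0^R t^{-\beta}\,dt<\infty$; it then integrates over $m\in L$. Your dichotomy on $(\delta/2,\delta)$ is a variant of the paper's choice of $a$, and your observation that the constant does not depend on the width of $W$ matches the paper, which also notes that this width is uniform over $L$ by compactness of $\Delta_L$.
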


\begin{proof}
Choose \(R>0\) such that
\[
    m+t\in J,\qquad m-t\in J
\]
for every \(m\in L\) and every \(|t|<R\). Since \(\Delta_L\) is compact and
contained in \(W\), after decreasing \(R\) if necessary, there exists
\(\varepsilon_0\in(0,R)\) such that
\[
    (m+t,m-t)\in W
    \qquad
    (m\in L,\ |t|<\varepsilon_0).
\]

We first prove a one-dimensional estimate. Let \(f\) satisfy
\[
    \int_{-R}^R
    \left(
        |f|^2+|f'|^2
    \right)|t|^\beta\,dt<\infty
\]
and
\[
    f\ge1
    \qquad
    |t|^\beta dt\text{-a.e. on }(-\varepsilon_0,\varepsilon_0).
\]
Then there exists \(c_{1d}=c_{1d}(R,\beta)>0\) such that
\begin{align} \label{e:1D}
    \int_{-R}^R
    \left(
        |f|^2+|f'|^2
    \right)|t|^\beta\,dt
    \ge c_{1d}.
\end{align}
Indeed, since \(\beta<1\),
\[
    \int_{-R}^R t^{-\beta}\,dt<\infty.
\]
Thus, by Cauchy--Schwarz, 
\[
\begin{aligned}
    \int_{-R}^{R}|f(t)|\,dt
    &\le
    \left(
        \int_{-R}^{R}|f(t)|^2|t|^\beta\,dt
    \right)^{1/2}
    \left(
        \int_{-R}^{R}|t|^{-\beta}\,dt
    \right)^{1/2}
    <\infty,\\
    \int_{-R}^{R}|f'(t)|\,dt
    &\le
    \left(
        \int_{-R}^{R}|f'(t)|^2|t|^\beta\,dt
    \right)^{1/2}
    \left(
        \int_{-R}^{R}|t|^{-\beta}\,dt
    \right)^{1/2}
    <\infty.
\end{aligned}
\]
Hence \(f\in W^{1,1}((-R,R))\), and \(f\) admits an absolutely
continuous representative on \([-R,R]\).
Since \(f\ge1\) a.e.~near \(0\), this
representative satisfies \(f(0)\ge1\).
Put
\[
    A\define\int_0^R |f'(t)|^2t^\beta\,dt,
    \qquad
    B\define\int_0^R |f(t)|^2t^\beta\,dt .
\]
Choose \(a\in[R/2,R]\) such that
\[
    |f(a)|^2
    \le
    \frac2R\int_{R/2}^R |f(t)|^2\,dt
    \le C(R,\beta)B .
\]
Then, noting that $f(a)-f(0)=\int_0^a f'(t)\,dt$ implies
$$|f(0)| \le |f(a)| + \int_0^a|f'(t)|\,dt \le  |f(a)| + \int_0^R|f'(t)|\,dt,$$
we have
\[
\begin{aligned}
    1
    \le |f(0)|
    &\le |f(a)|+\int_0^R |f'(t)|\,dt                                    \\
    &\le C(R,\beta)B^{1/2}
       +
       \left(
           \int_0^R |f'(t)|^2t^\beta\,dt
       \right)^{1/2}
       \left(
           \int_0^R t^{-\beta}\,dt
       \right)^{1/2}                                                       \\
    &\le C'(R,\beta)(A+B)^{1/2}.
\end{aligned}
\]
Therefore \(A+B\ge c_{1d}(R,\beta)>0\), proving the one-dimensional estimate~\eqref{e:1D}.

\smallskip
Now let \(v\in H^1_w(J^2)\). Use the change of variables
\[
    x=m+t,\qquad y=m-t .
\]
For a.e. \(m\in L\), the slice
\[
    f_m(t)\define v(m+t,m-t),\qquad |t|<R,
\]
belongs to the one-dimensional weighted Sobolev space associated with
\[
    \int_{-R}^R
    \left(
        |f|^2+|f'|^2
    \right)|t|^\beta\,dt ,
\]
and its weak derivative satisfies
\[
    f'_m(t)
    =
    \partial_xv(m+t,m-t)-\partial_yv(m+t,m-t)
\]
for a.e. \(t\in(-R,R)\). Moreover, by the assumption \(v\ge1\) on \(W\),
\[
    f_m(t)\ge1
    \qquad
    |t|^\beta dt\text{-a.e. on }(-\varepsilon_0,\varepsilon_0)
\]
for a.e.~\(m\in L\). Applying the one-dimensional estimate~\eqref{e:1D} and integrating over
\(m\in L\), we obtain
\[
    |L|c_{1d}(R,\beta)
    \le
    \int_L\int_{-R}^R
    \left(
        |f_m(t)|^2+|f'_m(t)|^2
    \right)|t|^\beta\,dt\,dm .
\]
Since
\[
    |f'_m(t)|^2
    \le
    2\left(
        |\partial_xv|^2+|\partial_yv|^2
    \right)(m+t,m-t),
\]
and since \(dxdy=2\,dmdt\), while
\[
    |x-y|^\beta=2^\beta |t|^\beta,
\]
the last display is bounded above by a constant multiple, depending only on
\(\beta\), of
\[
    \int_{J^2}
    \left(
        |v|^2+|\partial_xv|^2+|\partial_yv|^2
    \right)|x-y|^\beta\,dx\,dy .
\]
This proves the assertion.
\end{proof}

\begin{proof}[Proof of Theorem~\ref{thm:lqg_collision}]
Let the data in Assumption~\ref{ass:lqg_collision} be fixed. Thus we have
\[
    r>0,\qquad J\subset I_r,\qquad k\in\mathbb N\cup\{0\},
\]
a measurable set
\[
    G\subset\Conf(I_r^c),
    \qquad
    \mu_{I_r^c}(G)>0,
\]
a compact set
\[
    K\subset I_r^k,
\]
and constants
\[
    0\le\beta<1,\qquad c_\ast>0.
\]
When \(k=0\), we use the convention that \(K=\{\ast\}\), \(|K|=1\), and that
all integrations over \(K\) are omitted. Set
\[
    N\define k+2.
\]

After replacing \(G\) by a measurable subset of the same positive
\(\mu_{I_r^c}\)-measure if necessary, we may assume that the sector density and the
lower bound in Assumption~\ref{ass:lqg_collision} hold for every \(\eta\in G\).
Choose a compact interval
\[
    L\Subset J
\]
with positive Lebesgue measure. By inner regularity, choose a compact set
\(
    G_0\subset G
\)
such that
\[
    \mu_{I_r^c}(G_0)>0.
\]

For \((\eta,x,y,z)\in G_0\times J^2\times K\), where
\(z=(z_1,\ldots,z_k)\) when \(k\ge1\), define
\[
    \Phi(\eta,x,y,z)
    \define
    \eta+\delta_x+\delta_y+\sum_{j=1}^k\delta_{z_j},
\]
and
\[
    \Phi_0(\eta,x,z)
    \define
    \eta+2\delta_x+\sum_{j=1}^k\delta_{z_j}.
\]
When \(k=0\), the sums are absent.

Define
\[
    \mathscr C_0
    \define
    \Phi_0(G_0\times L\times K).
\]
The map \(\Phi_0\) is continuous for the vague topology, and
\(G_0\times L\times K\) is compact. Hence \(\mathscr C_0\) is compact. Moreover,
\(
    \mathscr C_0\subset \mathscr C.
\)
It is therefore enough to prove
\[
    \Cap(\mathscr C_0)>0.
\]

Let \(O\subset\Conf\) be open and suppose that
\(
    \mathscr C_0\subset O.
\)
Let \(u\in\mathcal D\) be admissible for \(O\), namely
\[
    u\ge1
    \qquad
    \mu\text{-a.e. on }O.
\]
Since
\(
    \Phi(G_0\times\Delta_L\times K)
    =
    \mathscr C_0
    \subset O,
\)
where
\(
    \Delta_L\define\{(x,x):x\in L\},
\)
we have
\[
    G_0\times\Delta_L\times K
    \subset
    \Phi^{-1}(O).
\]
The set \(G_0\times\Delta_L\times K\) is compact and \(\Phi^{-1}(O)\) is open.
Hence, by compactness, there exists an open neighborhood \(W\subset J^2\) of
\(\Delta_L\) such that
\begin{equation}\label{eq:lqg_tube}
    G_0\times W\times K
    \subset
    \Phi^{-1}(O).
\end{equation}

We next pass from \(u\in\mathcal D\) to the finite-volume sector representatives.
By~Lemma~\ref{l:CLE}, the form
\((\mathcal E_r,\mathcal D_r)\) satisfies
\begin{equation}\label{eq:lqg_local_domination}
    \mathcal D\subset\mathcal D_r,
    \qquad
    \mathcal E_r(u,u)\le \mathcal E(u,u)
    \qquad
    (u\in\mathcal D).
\end{equation}
For \(\mu_{I_r^c}\)-a.e.~\(\eta\), denote by
\(
    u_r^{\eta,N}:I_r^N\to\mathbb R
\)
the \(N\)-particle sector representative of \(u\) in \(I_r\). Thus
\[
    u\left(\eta+\sum_{i=1}^N\delta_{x_i}\right)
    =
    u_r^{\eta,N}(x_1,\ldots,x_N)
\]
for the corresponding conditional measure-a.e. \((\eta,x_1,\ldots,x_N)\). We prove the following auxiliary statement.

\medskip
\noindent\textbf{Claim.}
\(\mu_{I_r^c}(d\eta)\,dz\)-a.e.
\(
    (\eta,z)\in G_0\times K,
\)
the two-variable slice
\[
    v_{\eta,z}(x,y)
    \define
    u_r^{\eta,N}(x,y,z_1,\ldots,z_k),
    \qquad (x,y)\in J^2,
\]
belongs to \(H^1_w(J^2)\), where
\(
    w(x,y)\define|x-y|^\beta
\)
and \[
    v_{\eta,z}(x,y)\ge1
\]
for \(dxdy\)-a.e. \((x,y)\in W\).
\begin{proof}[Proof of Claim]
Fix such an exterior configuration \(\eta\). By the domain characterization for superposed Dirichlet forms
\cite[Proposition~3.1.1]{BouleauHirsch1991} (see also~\cite[Proposition 5.14]{LzDSSuz21}), for \(\mu_{I_r^c}\)-a.e. \(\eta\),
the finite-volume representative \(u_r^\eta\) belongs to the closed domain
\(\mathcal D_r^\eta\), and the \(N\)-particle sector representative
\(u_r^{\eta,N}\) belongs to \(\mathcal D_r^{\eta,N}\).
Thus, there exists
a sequence
\[
    f_m\in C^\infty(I_r^N)
\]
such that
\[
    f_m\to u_r^{\eta,N}
\]
in the \(N\)-sector graph norm. Hence
\begin{equation}\label{eq:lqg_sector_cauchy}
    \int_{I_r^N}
    \left(
        |f_m-f_n|^2
        +
        \sum_{i=1}^N|\partial_i f_m-\partial_i f_n|^2
    \right)
    p_{r,N}^\eta\,dx
    \longrightarrow 0.
\end{equation}
On \(J^2\times K\), Assumption~\ref{ass:lqg_collision} gives
\begin{equation}\label{eq:lqg_density_lower}
    p_{r,N}^\eta(x,y,z)
    \ge
    c_\ast |x-y|^\beta
\end{equation}
for a.e. \((x,y,z)\in J^2\times K\). Therefore
\[
\begin{aligned}
&\int_K
    \|f_m(\cdot,\cdot,z)-f_n(\cdot,\cdot,z)\|_{H^1_w(J^2)}^2\,dz      \\
&\quad
\le
\frac1{c_\ast}
\int_K\int_{J^2}
\left(
    |f_m-f_n|^2
    +
    |\partial_x f_m-\partial_x f_n|^2
    +
    |\partial_y f_m-\partial_y f_n|^2
\right)
p_{r,N}^\eta(x,y,z)\,dx\,dy\,dz .
\end{aligned}
\]
By \eqref{eq:lqg_sector_cauchy}, the right-hand side tends to zero. Hence
\[
    z\mapsto f_m(\cdot,\cdot,z)
\]
is Cauchy in \(L^2(K;H^1_w(J^2))\). Moreover, the same estimate with only the \(L^2_w\)-part shows that
\(f_m\to u_r^{\eta,N}\) in \(L^2(K;L^2_w(J^2))\). Hence the
\(L^2(K;H^1_w(J^2))\)-limit agrees with the slice of the sector representative
\(u_r^{\eta,N}\).
Consequently, for Lebesgue-a.e.~\(z\in K\),
\[
    v_{\eta,z}\in H^1_w(J^2).
\]

We now transfer the admissibility condition \(u\ge1\) on \(O\) to the slices.
Since \(u\ge1\) \(\mu\)-a.e. on \(O\),
\[
    (1-u)_+=0
    \qquad
    \mu\text{-a.e. on }O.
\]
Using the disintegration of \(\mu\), then restricting to the
\(N\)-particle sector, and using \eqref{eq:lqg_tube}, we get
\[
\begin{aligned}
0
&=
\int_O (1-u)_+^2\,d\mu                                                     \\
&\ge
\frac1{N!}
\int_{G_0}\int_K\int_W
\left(
    1-u_r^{\eta,N}(x,y,z)
\right)_+^2
p_{r,N}^\eta(x,y,z)
\,dx\,dy\,dz\,\mu_{I_r^c}(d\eta).
\end{aligned}
\]
Therefore, for \(\mu_{I_r^c}(d\eta)\,dz\)-a.e.
\(
    (\eta,z)\in G_0\times K,
\)
we have
\[
    v_{\eta,z}(x,y)\ge1
\]
for \(p_{r,N}^\eta(x,y,z)\,dx\,dy\)-a.e. \((x,y)\in W\). By
\eqref{eq:lqg_density_lower}, this implies
\[
    v_{\eta,z}(x,y)\ge1
\]
for \(dxdy\)-a.e. \((x,y)\in W\). This proves the claim.
\end{proof}

We resume the proof of Theorem~\ref{thm:lqg_collision}.
Applying Lemma~\ref{lem:weighted_two_particle} and the above Claim to \(v_{\eta,z}\), we obtain
\[
\begin{aligned}
    \int_{J^2}
    \left(
        |v_{\eta,z}|^2
        +
        |\partial_xv_{\eta,z}|^2
        +
        |\partial_yv_{\eta,z}|^2
    \right)
    |x-y|^\beta\,dx\,dy
    \ge
    c_{L,J,\beta}
\end{aligned}
\]
for \(\mu_{I_r^c}(d\eta)\,dz\)-a.e. \((\eta,z)\in G_0\times K\). Combining this with
\eqref{eq:lqg_density_lower}, we get
\begin{equation}\label{eq:lqg_integrated_lower_bound}
\begin{aligned}
&\int_{G_0}\int_K\int_{J^2}
\left(
    |u_r^{\eta,N}(x,y,z)|^2
    +
    |\partial_xu_r^{\eta,N}(x,y,z)|^2
    +
    |\partial_yu_r^{\eta,N}(x,y,z)|^2
\right)
p_{r,N}^\eta(x,y,z)
\,dx\,dy\,dz\,\mu_{I_r^c}(d\eta)                                      \\
&\qquad
\ge
c_\ast c_{L,J,\beta}\,
\mu_{I_r^c}(G_0)|K|.
\end{aligned}
\end{equation}

Next, compare the left-hand side with the local energy and the \(L^2(\mu)\)-norm.
The \(N\)-particle sector contribution gives
\[
\begin{aligned}
    \mathcal E_r(u,u)
    &\ge
    \frac1{2N!}
    \int_{G_0}\int_K\int_{J^2}
    \left(
        |\partial_xu_r^{\eta,N}|^2
        +
        |\partial_yu_r^{\eta,N}|^2
    \right)
    p_{r,N}^\eta
    \,dx\,dy\,dz\,\mu_{I_r^c}(d\eta),
\end{aligned}
\]
and
\[
\begin{aligned}
    \|u\|_{L^2(\mu)}^2
    &\ge
    \frac1{N!}
    \int_{G_0}\int_K\int_{J^2}
    |u_r^{\eta,N}|^2
    p_{r,N}^\eta
    \,dx\,dy\,dz\,\mu_{I_r^c}(d\eta).
\end{aligned}
\]
Therefore, by \eqref{eq:lqg_integrated_lower_bound},
\[
\begin{aligned}
    \mathcal E_r(u,u)+\|u\|_{L^2(\mu)}^2
    &\ge
    \frac1{2N!}
    \int_{G_0}\int_K\int_{J^2}
    \left(
        |u_r^{\eta,N}|^2
        +
        |\partial_xu_r^{\eta,N}|^2
        +
        |\partial_yu_r^{\eta,N}|^2
    \right)
    p_{r,N}^\eta
    \,dx\,dy\,dz\,\mu_{I_r^c}(d\eta)                                   \\
    &\ge
    \frac{c_\ast c_{L,J,\beta}}{2N!}
    \mu_{I_r^c}(G_0)|K|.
\end{aligned}
\]
Set
\[
    c_{\mathrm{lqg}}
    \define
    \frac{c_\ast c_{L,J,\beta}}{2N!}
    \mu_{I_r^c}(G_0)|K|.
\]
Then \(c_{\mathrm{lqg}}>0\). By \eqref{eq:lqg_local_domination},
\[
    \mathcal E(u,u)+\|u\|_{L^2(\mu)}^2
    \ge
    \mathcal E_r(u,u)+\|u\|_{L^2(\mu)}^2
    \ge
    c_{\mathrm{lqg}}.
\]

This lower bound holds for every open \(O\supset\mathscr C_0\) and every
\(u\in\mathcal D\) admissible for \(O\). Hence
\[
    \Cap(O)\ge c_{\mathrm{lqg}}
    \qquad
    \text{for every open }O\supset\mathscr C_0.
\]
Taking the infimum over all such \(O\), we obtain
\[
    \Cap(\mathscr C_0)
    \ge
    c_{\mathrm{lqg}}
    >0.
\]
Since \(\mathscr C_0\subset\mathscr C\), monotonicity gives
\begin{align} \label{e:PC}
    \Cap(\mathscr C)>0.
\end{align}

Suppose now that \((\mathcal E,\mathcal D)\) is quasi-regular, thus admits a properly
associated diffusion~$(\mathbf X_t, \mathbf P_\gamma)$. By~\eqref{e:PC} combined with~Chen--Fukushima~\cite[Theorem~3.1.3]{ChenFukushima2012},
\[
    \mathbf P_\mu[\tau_{\mathscr C}<\infty]>0.
\]
It remains to prove the final assertion under irreducibility. 
Since \(\mu\) is a probability measure and \(1\in\mathcal D\) with
\[
    \mathcal E(1,1)=0,
\]
the closed form \((\mathcal E,\mathcal D)\) is recurrent by~\cite[Theorem~2.1.8]{ChenFukushima2012}. Since it is also assumed
irreducible, the associated symmetric diffusion process is irreducible recurrent.
Thus, by~\cite[Theorem~3.5.6(ii)]{ChenFukushima2012},
we obtain
\[
    \mathbf P_\gamma[\tau_{\mathscr C}<\infty]=1
    \qquad\text{for q.e. }\gamma .
\]
This completes the proof.
\end{proof}

\section{Examples}

\begin{proof}[Proof of Theorem~\ref{DysonThm}]
We split the proof into the two regimes.

\medskip
\noindent{\bf The case \(\beta\ge1\).}
By the correlation formulae and diagonal asymptotics for the \(\mathsf{Sine}_\beta\)
process obtained in~\cite{AssiotisNajnudel}, the second and third factorial
correlation densities \(\rho_2\) and \(\rho_3\) exist and satisfy the local
repulsion bounds required in Assumption~\ref{ass:diag_repulsion}. Hence
Theorem~\ref{thm:noncollision} gives
\[
\Cap_\beta(\mathscr C)=0 .
\]

\medskip
\noindent{\bf The case \(0<\beta<1\).}
We verify Assumption~\ref{ass:lqg_collision}. Since \(\mathsf{Sine}_\beta\) has
intensity one, for every \(r>0\),
\[
\int \theta(I_r)\,\mu(d\theta)=2r.
\]
Choose \(r>1\). Then
\[
\mu(\theta(I_r)\ge2)>0.
\]
Indeed, otherwise \(\theta(I_r)\le1\) \(\mu\)-a.s., which would imply
\[
\int \theta(I_r)\,\mu(d\theta)\le1,
\]
contradicting \(2r>1\). Therefore
\[
\sum_{n=2}^{\infty}\mu(\theta(I_r)=n)
=
\mu(\theta(I_r)\ge2)>0,
\]
and hence there exists a fixed integer \(n\ge2\) such that
\[
\mu(\theta(I_r)=n)>0.
\]

By number rigidity for \(\mathsf{Sine}_\beta\) by~\cite{ChhaibiNajnudel2018, DLR}, there exists a measurable function
\(
N_r:\Conf(I_r^c)\to \mathbb N\cup\{0\}
\)
such that
\[
\mu_{I_r}^{\eta}(\theta(I_r)=N_r(\eta))=1
\qquad
\text{for \(\mu_{I_r^c}\)-a.e. }\eta .
\]
Set
\(
G^{(n)}\define\{\eta\in\Conf(I_r^c):N_r(\eta)=n\}.
\)
Then
\[
\mu(\theta(I_r)=n)
=
\int \mu_{I_r}^{\eta}(\theta(I_r)=n)\,\mu_{I_r^c}(d\eta)
=
\mu_{I_r^c}(G^{(n)}),
\]
and hence
\[
\mu_{I_r^c}(G^{(n)})>0.
\]

By the DLR description of \(\mathsf{Sine}_\beta\) in~\cite{DLR}, the condition~\ref{e:EXD} in Assumption~\ref{ass:lqg_collision} holds for any $r>0$. Thus,  after removing a
\(\mu_{I_r^c}\)-null subset from \(G^{(n)}\), we may assume that for every
\(\eta\in G^{(n)}\),
\[
\mu_{I_r}^{\eta}(\theta(I_r)=n)=1,
\]
and that the conditional law in \(I_r\) admits a symmetric density
\[
q_{r,n}^{\eta}(x_1,\ldots,x_n)
=
\prod_{1\le i<j\le n}|x_i-x_j|^\beta\,
h_{r,n}^{\eta}(x_1,\ldots,x_n),
\]
where \(x\mapsto h_{r,n}^{\eta}(x)\) is  a continuous strictly positive bounded function on
\(I_r^n\) for $\mu_{I_r^c}$-a.e.~$\eta$, and $(x, \eta) \mapsto h_{r,n}^{\eta}(x)$ is jointly measurable; see~\cite[Theorem~1.1, Theorem~2.1, and Lemma~2.3]{DLR}.

On \(G^{(n)}\), the conditional law is supported on the \(n\)-particle sector.
Therefore the unnormalised \(n\)-sector density in Assumption~\ref{ass:lqg_collision}
is simply
\[
p_{r,n}^{\eta}=n!q_{r,n}^{\eta}.
\]
Set
\(
k = n-2.
\)
Choose a bounded open interval \(J\subset I_r\) such that
\[
    \overline J\subset I_r .
\]
If \(k=0\), we use the convention
\(
K=\{\ast\}\) and \(|K|=1.
\)
If \(k\ge1\), choose a compact set
\[
K\subset (I_r\setminus \overline J)^k
\]
of positive \(k\)-dimensional Lebesgue measure such that
\[
z_i\neq z_j \quad (i\neq j)
\]
for every \(z=(z_1,\ldots,z_k)\in K\). We choose \(K\) so that it has positive
distance both from \(J\) and from the diagonals. Equivalently, there exists
\(d_0>0\) such that for all \(x,y\in J\) and all \(z=(z_1,\ldots,z_k)\in K\),
\[
|x-z_j|\ge d_0,\qquad |y-z_j|\ge d_0
\quad (1\le j\le k),
\]
and
\[
|z_a-z_b|\ge d_0
\quad (1\le a<b\le k).
\]

We next extract a positive-measure exterior set on which the factor
\(h_{r,n}^{\eta}\) is uniformly bounded from below on~
\(\overline J^2\times K\). When \(k=0\), interpret this  as
\(\overline J^2\).

Let
\[
\Lambda \define
\begin{cases}
\overline J^2, & k=0,\\
\overline J^2\times K, & k\ge1.
\end{cases}
\]
Since \(h_{r,n}^{\eta}\) is continuous and strictly positive on the compact set
\(\Lambda\), for every \(\eta\in G^{(n)}\),
\[
m(\eta)\define\inf_{u\in\Lambda}h_{r,n}^{\eta}(u)>0.
\]
Taking a countable dense subset \(D\subset \Lambda\), we have
\(
m(\eta)=\inf_{u\in D}h_{r,n}^{\eta}(u).
\)
Combined with the joint-measurability~$(x, \eta) \mapsto h_{r, n}^\eta(x)$, we obtain that \(m\) is measurable. Define
\[
G_\ell\define\{\eta\in G^{(n)}:m(\eta)\ge 1/\ell\},
\qquad \ell\in\mathbb N.
\]
Then
\[
G^{(n)}=\bigcup_{\ell=1}^{\infty}G_\ell.
\]
Since \(\mu_{I_r^c}(G^{(n)})>0\), there exists \(\ell\in\mathbb N\) such that
\(
\mu_{I_r^c}(G_\ell)>0.
\)
Set
\[
G = G_\ell,
\qquad
c_h = 1/\ell.
\]
Then, for every \(\eta\in G\),
\[
h_{r,n}^{\eta}(u)\ge c_h
\qquad
(u\in\Lambda).
\]

We now prove the lower bound required in Assumption~\ref{ass:lqg_collision}.
If \(k=0\), then for a.e. \((x,y)\in J^2\),
\[
p_{r,2}^{\eta}(x,y)
=
2!q_{r,2}^{\eta}(x,y)
=
2|x-y|^\beta h_{r,2}^{\eta}(x,y)
\ge
2c_h |x-y|^\beta .
\]
Thus Assumption~\ref{ass:lqg_collision} holds with \(c_*=c_h\).

If \(k\ge1\), then for a.e. \((x,y,z)\in J^2\times K\),
\[
p_{r,n}^{\eta}(x,y,z)
=
n!q_{r,n}^{\eta}(x,y,z)
=
n!\prod_{1\le i<j\le n}|u_i-u_j|^\beta\,
h_{r,n}^{\eta}(u),
\]
where
\(
u=(u_1,\ldots,u_n)=(x,y,z_1,\ldots,z_k).
\)
By the separation property of \(K\), all factors in the Vandermonde product
except \(|x-y|^\beta\) are bounded below by a positive constant. More precisely,
\[
\prod_{1\le i<j\le n}|u_i-u_j|^\beta
\ge
d_0^{\beta\left(2k+\binom{k}{2}\right)}|x-y|^\beta .
\]
Therefore
\[
p_{r,n}^{\eta}(x,y,z)
\ge
n!c_h\,d_0^{\beta\left(2k+\binom{k}{2}\right)}|x-y|^\beta .
\]
Hence Assumption~\ref{ass:lqg_collision} holds with
\[
c_*\define n!c_h\,d_0^{\beta\left(2k+\binom{k}{2}\right)}.
\]

Thus Assumption~\ref{ass:lqg_collision} is verified for the fixed integer \(k=n-2\),
the positive-measure exterior set \(G\), and the compact set \(K\). By
Theorem~\ref{thm:lqg_collision},
\[
\Cap_\beta(\mathscr C)>0.
\]

\paragraph{The closability and the quasi-regularity}
The closability can be seen, e.g., by Lemma~\ref{l:CLE} with the observation that \ref{e:EXD}~of~Assumption~\ref{ass:lqg_collision} holds thanks to the DLR equation in~\cite{DLR} as seen above.
The quasi-regularity follows by applying~\cite[Theorem 1]{Osa96}. To do so, it suffices to check 
$$\sum_{n\ge 1} n \mu(\Theta_r^n)<\infty, \qquad \rho_n \in L^\infty(I_r^n, dx) \quad \forall r>0,$$
where $\rho_n(x_1, \ldots, x_n)$ is the $n$-th factorial correlation density. The first estimate about the series is immediate by noting that $\Sine_\beta$ has the unit intensity $\rho_1$, which implies 
$$\sum_{n\ge 1} n \mu(\Theta_r^n) = \int_\Theta \theta(I_r) \mu(d\theta)= \int_{-r}^r \rho_1(x)dx<\infty.$$
The second part is a direct corollary of~the correlation function estimates obtained in~\cite{AssiotisNajnudel}.

Combining the above,  the theorem has been proved.
\end{proof}

Recall the definition of a determinantal measure $\mu=\mu_{\mathsf{K}}$ associated to a Hermitian correlation kernel $\mathsf{K}:\mathbb{R}\times \mathbb{R}\to \mathbb{C}$ from \cite{BorodinDet,Soshnikov00,JohanssonDet}.

\begin{prop}\label{prop:DetermintalCap}
Let $\mu=\mu_\mathsf{K}$ be a determinantal measure and suppose the kernel $\mathsf{K}$ is jointly locally Lipschitz continuous. Assume in addition that the pre-Dirichlet form~$(\cE, \cD_\infty)$ is closable. Then, $\Cap(\mathscr{C})=0$.
\end{prop}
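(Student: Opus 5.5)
The plan is to verify Assumption~\ref{ass:diag_repulsion} for $\mu_{\mathsf K}$ and then invoke Theorem~\ref{thm:noncollision}. Closability is assumed. For a determinantal measure with Hermitian kernel, the factorial correlation measures have densities $\rho_n(x_1,\dots,x_n)=\det[\mathsf K(x_i,x_j)]_{i,j=1}^n$. These densities are nonnegative, symmetric, continuous (since $\mathsf K$ is continuous) and therefore locally integrable, and they satisfy the Campbell identities \eqref{eq:rho2-campbell}--\eqref{eq:rho3-campbell} by definition of the correlation functions. What remains is the two diagonal upper bounds \eqref{eq:rho2-repulsion} and \eqref{eq:rho3-repulsion} on a bounded interval $J$.

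\textbf{Two-point bound.} Write
\[
\rho_2(x,y)=\mathsf K(x,x)\mathsf K(y,y)-|\mathsf K(x,y)|^2
\]
and use the row operation that subtracts the first row from the second. This gives
\[
\rho_2(x,y)=\det\begin{pmatrix}\mathsf K(x,x)&\mathsf K(x,y)\\ \mathsf K(y,x)-\mathsf K(x,x)&\mathsf K(y,y)-\mathsf K(x,y)\end{pmatrix}.
\]
The entries of the second row are bounded by $\mathrm{Lip}_{\bar J}(\mathsf K)\,|x-y|$, because $\mathsf K$ is Lipschitz on the compact set $\bar J^2$ (one may enlarge $J$ slightly). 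The entries of the first row are bounded by $\sup_{\bar J^2}|\mathsf K|<\infty$. Expanding the determinant then gives $\rho_2(x,y)\le C_J|x-y|$ for $x,y\in J$, which is \eqref{eq:rho2-repulsion} with no restriction on $|x-y|$.

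\textbf{Three-point bound.} Apply the same idea to
\[
\rho_3(x,y,z)=\det[\mathsf K(u_i,u_j)]_{i,j=1}^3,\qquad (u_1,u_2,u_3)=(x,y,z).
\]
Subtract row 1 from rows 2 and 3. Each entry of the new rows 2 and 3 has the form $\mathsf K(u_i,u_j)-\mathsf K(x,u_j)$, so it is bounded by $\mathrm{Lip}\cdot|u_i-x|\le \mathrm{Lip}\cdot\delta$, where $\delta\define |x-y|\vee|x-z|\vee|y-z|$. The first row stays bounded. Expanding along the first row, every term contains one entry from row 2 and one from row 3, so
\[
|\rho_3(x,y,z)|\le C_J\,\delta^2 .
\]
Since $\delta\le 1$ on the relevant region, this gives \eqref{eq:rho3-repulsion} with $\alpha_J=2$ (any $\alpha_J\le2$ works).

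\textbf{Conclusion and main obstacle.} With both bounds in hand, Assumption~\ref{ass:diag_repulsion} holds, and Theorem~\ref{thm:noncollision} yields $\Cap(\mathscr C)=0$. The only point needing care is technical. The representation of the correlation measures by continuous determinantal densities must be exactly in the form \eqref{eq:rho2-campbell}--\eqref{eq:rho3-campbell}, which holds by the definition of $\mu_{\mathsf K}$ as in \cite{BorodinDet,Soshnikov00}. We must also use the interpretation of "jointly locally Lipschitz" as Lipschitz on compacts of $\R^2$, so that the row differences are controlled uniformly in the remaining column variable. I do not expect any genuine difficulty beyond this bookkeeping.
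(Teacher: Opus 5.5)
Your proposal is correct and follows the paper's route. You write $\rho_2,\rho_3$ as determinants of $[\mathsf{K}(x_i,x_j)]$, use local Lipschitz continuity of $\mathsf{K}$ to verify the diagonal bounds of Assumption~\ref{ass:diag_repulsion}, and conclude by Theorem~\ref{thm:noncollision}. Your row-subtraction argument makes explicit what the paper calls ``elementary manipulations''. By subtracting row~1 from both rows~2 and~3 you get $\rho_3\le C_J\delta^2$, i.e.\ $\alpha_J=2$, which is stronger than the paper's stated $\alpha=1$ and implies it because $\delta\le 1$.
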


\begin{proof}
Since $\mu_\mathsf{K}$ is determinantal we have that $\rho_2(x_1,x_2)$ and $\rho_3(x_1,x_2,x_3)$ are given as determinants of $[\mathsf{K}(x_i,x_j)]_{i,j=1}^M$ with $M=2,3$. By virtue of the joint local Lipschitz continuity of $\mathsf{K}$, the bounds \eqref{eq:rho2-repulsion}, \eqref{eq:rho3-repulsion} readily follow from elementary manipulations. Indeed, by joint local Lipschitz continuity of \(K\), the determinant defining
\(\rho_2\) vanishes at least linearly when two variables coalesce, and the
determinant defining \(\rho_3\) vanishes at least linearly near the triple
diagonal. Hence the bounds in Assumption~\ref{ass:diag_repulsion} hold with \(\alpha=1\).
\end{proof}

\bibliographystyle{acm}
\bibliography{References}

\bigskip 

\noindent{\sc School of Mathematics, University of Edinburgh, James Clerk Maxwell Building, Peter Guthrie Tait Rd, Edinburgh EH9 3FD, U.K.}\newline
\href{mailto:theo.assiotis@ed.ac.uk}{\small theo.assiotis@ed.ac.uk}

\bigskip

\noindent{\sc Department of Mathematical Science, Durham University, DH13LE, Durham, United Kingdom}\newline
\href{mailto:kohei.suzuki@durham.ac.uk}{\small kohei.suzuki@durham.ac.uk}

\end{document}